\documentclass[11pt, a4paper]{article}

\usepackage{a4wide}
\usepackage{amsmath}
\usepackage{amssymb}
\usepackage{amsthm}
\usepackage{txfonts}
\usepackage{mathtools}
\usepackage{tikz}
\usepackage{mdwlist}
\usepackage{xcolor}
\usepackage{comment}
\usepackage{ifthen}
\usepackage{hyperref}

\newif\ifNODEF\NODEFfalse

\newtheorem{theorem}{Theorem}
\newtheorem{remark}{Remark}
\newtheorem{lemma}{Lemma}

\newtheorem{ass}{Assumption}

\renewcommand{\eqref}[1]{\hyperref[#1]{(\ref{#1})}}

\newcommand{\pt}{\partial}

\newcommand{\RR}{\mathbb{R}}
\renewcommand{\phi}{\varphi}
\renewcommand{\rho}{\varrho}
\renewcommand{\theta}{\vartheta}
\newcommand{\scal}[2]{\left(#1,#2\right)}
\newcommand{\dual}[2]{\left\langle#1,#2\right\rangle}
\newcommand{\bigscal}[2]{\bigl(#1,#2\bigr)}
\newcommand{\bigdual}[2]{\bigl\langle#1,#2\bigr\rangle}
\newcommand{\m}[1]{\mathcal{#1}}

\newcommand{\mK}{\mathcal{K}}
\newcommand{\mA}{\mathcal{A}}
\newcommand{\mL}{\mathcal{L}}
\newcommand{\mM}{\mathcal{M}}
\newcommand{\iL}{\mL^{-1}}
\newcommand{\iLM}{\mL^{-1}\mM}
\newcommand{\mMa}{\mM^*}

\newcommand{\iLMa}{\iL\mMa}

\newcommand{\Hp}[2]{H^1_0(#1) \cap H^#2(#1)}

\newcommand{\Z}{\mathbb{Z}}
\newcommand{\E}{\mathrm{e}}
\newcommand{\D}{\mathrm{d}}
\newcommand{\IntD}{\D}
\newcommand{\abs}[1]{\left|#1\right|}
\newcommand{\norm}[1]{\left\|#1\right\|}

\newcommand{\Opnorm}[1]{\left|\left|\left| #1\right|\right|\right|}	
\newcommand{\bignorm}[1]{\big\|#1\big\|}

\hypersetup{
  pdftitle    = {A posteriori error bounds for pseudo-parabolic equations
                 using $C_0$~semigroups},
  pdfsubject  = {Numerical Analysis},
  pdfauthor   = {Marin Ossadnik, Torsten Lin\ss},
  pdfkeywords = {pseudo-parabolic problems, a posteriori error estimation,
  elliptic reconstructions},
  pdfcreator  = {pdflatex},
  pdfproducer = {LaTeX with hyperref},
    colorlinks,
    linkcolor={red!90!black},
    citecolor={blue!90!black},
    urlcolor={green!90!black}
}

\author{Martin Ossadnik
   \and Torsten Lin\ss\thanks{Fakult\"at f\"ur Mathematik und Informatik,
        FernUniversit\"at in Hagen,
        Universit\"atsstra{\ss}e 11,
        58095 Hagen,
        Germany,
        \texttt{[martin.ossadnik,torsten.linss]@fernuni-hagen.de}}
}

\title{A posteriori error bounds for\\ pseudo-parabolic equations
       using $C_0$~semigroups}

\begin{document}
\maketitle
	
\begin{abstract}
  A class of pseudo-parabolic partial differential
  equations is considered.
  We derive a posteriori error bounds for approximations
  obtained by FEMs in space and a BDF formula in time.
  The analysis is based on the $C_0$~semigroup theory and an
  adaptation of the concept of elliptic reconstruction to
  pseudo-parabolic problems.
  The analysis is complemented with numerical experiments. 	
	
  \emph{Keywords:}
  pseudo-parabolic problems, a posteriori error estimation,
  elliptic reconstructions.

  \emph{AMS subject classification (2000):} 65M15, 65M50, 65M60.
\end{abstract}

\section{Introduction}

We consider the pseudo-parabolic problem of finding
\mbox{$u\colon[0,T] \mapsto H^1_0(\Omega)$},
\mbox{$\Omega \subset \RR^n$}, such that
\begin{subequations}\label{ibvp}
\begin{gather}
  \mK u(t) \coloneqq \mathcal{L} \partial_tu(t) + \mathcal{M} u(t) = F(t), \quad \text{in } (0,T]\, ,
\end{gather}
with two second-order, elliptic, time-independent operators
\mbox{$\mathcal{L}, \mathcal{M}\colon H^1_0(\Omega) \mapsto H^{-1}(\Omega)$}
and a source function \mbox{$F\in C \bigl([0,T]; H^{-1}(\Omega) \bigr)$}.
Furthermore an initial condition
\begin{gather}
  u(0) = u_0, \quad u_0 \in H^1_0(\Omega),
\end{gather}
is given.
We shall assume, that the operator $\mathcal{M}$ is bounded,
while $\mathcal{L}$ is bounded, coercive and symmetric.
By coercive we mean that the associated bilinear form is coercive.

\end{subequations}
	
Numerical approximations of pseudo-parabolic equations have been developed
in, e.\,g., \cite{ewing1975sobolev, ewing1978sobolev, ford1974sobolev,
ford1976sobolev, thomee1981sobolev}.
Most of these works focus on numerical methods for computing approximate
solutions and on \textit{a priori} estimates, i.\,e., convergence results are
derived in terms of the spatial and temporal mesh sizes under certain
regularity assumptions on the solution $u$.
In contrast, we focus on \textit{a posteriori} error bounds.
They are based on the computed numerical solution,
and therefore represent computable upper bounds for the error of the
numerical approximation.  

In \cite{tran2005estimation} an a posteriori error bound for semidiscrete
finite element methods for a nonlinear \textsc{Sobolev} equation has been
derived.
In contrast, we will present a posteriori error bounds
for full discretizations, i.\,e., both in time and in space.

An important contribution to the a posteriori error analysis of
\textit{parabolic problems} was the introduction of the concept of
\textit{elliptic reconstruction} by Nochetto and Makridakis
in~\cite{nochetto2003reconstrution}.
Subsequently, their ideas were used to analyze a number of discretizations
of parabolic problems by FEM and various time discretiszations methods,
including backward \textsc{Euler}, \textsc{Crank-Nicolson}, dG(1) and BDF-2
(cf. \cite{demlow2016error, demlow2010error, linss2013estimation,
ossadnik2024unified}).
We will adapt elliptic reconstruction to pseudo-parabolic problems
by leveraging the operator $\mL$. 

Another ingredient in the a posteriori error analysis in the contributions
mentioned is the use of the \textsc{Green}'s function associated with the
parabolic operator in order to obtain bounds in the maximum norm.
We will use a similar approach by considering the $C_0$~semigroup,
generated by the operator $-\iLM$,
combined with the mentioned adaptation of elliptic reconstructions.
As a result, we obtain computable error bounds in
\mbox{$C\bigl([0,T]; H^1_0(\Omega)\bigr)$} and 
\mbox{$C\bigl([0,T]; L^2(\Omega)\bigr)$} for time discretization methods 
of second order.
	
The paper is structured as follows.
In \autoref{sec-pre} we introduce some notation and then discuss the
existence of a unique solution to \eqref{ibvp}.
Furthermore, we present an estimate for the operator norms of the operators
of the $C_0$~semigroup, generated by the operator $\iLM$.
As mentioned, we will adapt elliptic reconstruction to
pseudo-parabolic equations.
This is done in \autoref{sec-ell-rec}.
Afterwards, in \autoref{ssec-green}, we introduce a function,
that is similar to the \textsc{Green}'s function,
to derive a \mbox{$C\bigl([0,T]; L^2(\Omega)\bigr)$} a posteriori error bound.
Our main results, \hyperref[theo-h1-est]{Theorems~\ref{theo-h1-est}}
and \ref{theo-l2-est},
will be derived in \hyperref[sec-h1-error]{Sections~\ref{sec-h1-error}}
and~\ref{sec-l2-error}.
Finally, a numerical example is considered in \autoref{sec-num-ex}.

\section{Preliminaries}\label{sec-pre}

\subsection{Notation}\label{subsec-not}
Let \mbox{$\scal{\cdot}{\cdot}$} be the standard inner product in $L_2(\Omega)$
and $\norm{\cdot}_{0,\Omega}$ the norm induced.
We shall also use the \textsc{Sobolev} spaces $H^k(\Omega)$ equipped with
the standard norm $\norm{\cdot}_{k,\Omega}$, and the
space $H^1_0(\Omega)$.
Furthermore, we denote by
\mbox{$\dual{\cdot}{\cdot}\colon H^{-1}(\Omega) \times H^1_0(\Omega)$}
the duality pairing between $H^1_0(\Omega)$ and its dual space $H^{-1}(\Omega)$.
The norm on $H^{-1}(\Omega)$ will be denoted by $\norm{\cdot}_{-1,\Omega}$.

Let \mbox{$a, c \colon H^1_0(\Omega) \times H^1_0(\Omega) \mapsto \RR$}
be the two bilinear forms associated with the operators $\mL$ and $\mM$,
respectively.
Then~\eqref{ibvp} reads:
Find \mbox{$u\colon[0,T] \mapsto H^1_0(\Omega)$},
such that $u(0) = u_0$ and
\begin{gather}\label{ibvp-vari}
  a\bigscal{\partial_t u(t)}{v} + c\bigscal{u(t)}{v} = \bigdual{F(t)}{v},
     \ \ \forall \ v\in H_0^1(\Omega), \ \ t\in(0,T]. 
\end{gather}
We assume there exist positive constants $C_\mL$, $C_\mM$ and $\alpha$,
such that for all \mbox{$w,v \in H^1_0(\Omega)$}
\begin{gather}\label{ibvp-ass}
  \abs{a\scal{w}{v}} \leq C_\mL \norm{w}_{1,\Omega} \norm{v}_{1,\Omega}\, ,
  \quad
  \abs{c\scal{w}{v}} \leq C_\mM \norm{w}_{1,\Omega} \norm{v}_{1,\Omega}
  \quad \text{and} \quad
  a\scal{w}{w} \geq \alpha \norm{w}^2_{1,\Omega}. 
\end{gather}
Note that, due to the boundedness of $c$, there is a constant
\(\gamma \in \RR \), such that
\begin{gather*}
  c\scal{w}{w} \geq \gamma \norm{w}^2_{1,\Omega},
  \quad \forall w \in H^1_0(\Omega).
\end{gather*}
Moreover, $a\scal\cdot\cdot$ defines an inner product on $H^1_0(\Omega)$
and induces a norm $\norm{\cdot}_{a,\Omega}$,
which is equivalent to~$\norm{\cdot}_{1,\Omega}$. 
For any bounded operator
\mbox{\(\mA\colon H^l(\Omega) \to H^{m}(\Omega)\)}, \(l,m \in \Z\),
its operator norm is defined by
\begin{gather*}
  \Opnorm{\mA}_{l,m}
    \coloneqq \sup_{\norm{v}_{l, \Omega} = 1} \norm{\mA v}_{m,\Omega},
\end{gather*} 
If $l=m$, we write \mbox{$\Opnorm{\cdot}_l$} instead of $\Opnorm{\cdot}_{l,l}$.

Next, we introduce the spaces
\begin{gather*}
  L^2\bigl(0,T;H^1_0(\Omega)\bigr) \coloneqq
    \left\{v\colon [0,T] \mapsto H_0^1(\Omega)\colon
       \int_{0}^{T} \norm{u(t)}^2_{1,\Omega} \D t < \infty\right\}.
  \intertext{and}
  H^1\bigl(0,T; H^1_0(\Omega)\bigr) \coloneqq
    \left\{v \in L_2\bigl(0,T; H^1_0(\Omega)\bigr)\colon
       v^\prime \in L_2\bigl(0,T;H^1_0(\Omega)\bigr)\right\}.
\end{gather*}

\subsection{Unique solvability and a \boldmath$C_0$~semigroup estimate}
\label{subsec-est-semigroup}

First, since \mbox{$a\scal\cdot\cdot$} is bounded and coercive
on $H^1_0(\Omega)$, an application of the \textsc{Lax-Milgram} Lemma
implies, that $\mL$ possesses a bounded
inverse \mbox{$\iL\colon H^{-1}(\Omega) \mapsto H^1_0(\Omega)$}.
Therefore, \eqref{ibvp} can equivalently be rewritten as
\begin{subequations}\label{ibvp-transform}
  \begin{align}
    \label{ibvp-transform-de}
    \pt_t u(t) &= -\iLM u(t) + \iL F(t), \quad \text{for } t \in (0,T], \\
    \label{ibvp-transform-iv}
          u(0) &= u_0.
  \end{align}	
\end{subequations} 
The \textsc{Lax-Milgram} Lemma and~\eqref{ibvp-ass} imply
\mbox{$\Opnorm{\iL}_{-1,1} \le 1/\alpha$}.
Furthermore, \mbox{$\Opnorm{\mM}_{1,-1} \le C_\mM$}.
As a consequence, the operator $-\iLM$ is bounded with
\begin{gather}\label{eq-bound-lm}
  \Opnorm{\iLM}_1 \leq \Opnorm{\iL}_{-1,1} \Opnorm{\mM}_{1,-1} 
                  \leq \frac{C_\mM}{\alpha}
\end{gather}
and thus the generator of a uniformly continuous
semigroup \mbox{$\bigl(S(t)\bigr)_{t\geq 0}$},
cf. \cite[Theorem 1.1.2]{pazy1983semigroups}.
Furthermore, we can conclude, that \eqref{ibvp} possesses a unique solution
\mbox{$u \in C^1\bigl([0,T]; H^1_0(\Omega)\bigr)$}
cf. \cite[Theorem 3.2.3]{tanabe1979evolution}.
Setting
\begin{gather}\label{S-exp}
  S(t) \coloneqq \E^{-t \iLM}, \quad t\geq 0,
\end{gather}
its solution $u$ can be represented as
\begin{gather} \label{eq-sol-u}
  u(t) = S(t)u_0 + \int_0^t S(t-s) \iL F(s) \IntD s \quad \forall t\in[0,T].
\end{gather}	
\begin{lemma}\label{lem-bound-c0}
  The $C_0$~semigroup $(S(t))_{t\geq0}$
  generated by the operator $-\iLM$, satisfies
  \begin{gather}\label{eq-bound-c0}
    \Opnorm{S(t)}_1 \leq \eta_{S,1}(t) \coloneqq 
	\sqrt{\frac{C_\mL}{\alpha}} \E^{-\nu t},
        \quad \forall \ t\geq0, \quad \text{with}
        \quad \nu \coloneqq \min\left\{\frac{\gamma}{C_\mL},
                                       \frac{\gamma}{\alpha}\right\}.
  \end{gather}
\end{lemma}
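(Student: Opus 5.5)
We will prove the bound by an energy argument in the norm $\norm{\cdot}_{a,\Omega}$ induced by $a$, and then transfer it back to $\norm{\cdot}_{1,\Omega}$ using the norm equivalence from \eqref{ibvp-ass}.

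Fix $v_0\in H^1_0(\Omega)$ and set $w(t)\coloneqq S(t)v_0$. Since $-\iLM$ is bounded, $w\in C^1\bigl([0,T];H^1_0(\Omega)\bigr)$ and $\pt_t w=-\iLM w$. Applying $\mL$ and testing with $v=w(t)$ gives
\begin{gather*}
  a\bigscal{\pt_t w(t)}{w(t)} + c\bigscal{w(t)}{w(t)} = 0 .
\end{gather*}
Because $a$ is symmetric, the first term equals $\tfrac12\,\tfrac{\D}{\D t}\norm{w(t)}_{a,\Omega}^2$. For the second term we use $c\scal{w}{w}\ge\gamma\norm{w}_{1,\Omega}^2$ and compare $\norm{w}_{1,\Omega}^2$ with $\norm{w}_{a,\Omega}^2$:
\begin{gather*}
  \alpha\norm{w}_{1,\Omega}^2\le\norm{w}_{a,\Omega}^2\le C_\mL\norm{w}_{1,\Omega}^2 .
\end{gather*}
The sign of $\gamma$ decides which inequality to use.

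\emph{Case $\gamma\ge0$.} Then $\gamma\norm{w}_{1,\Omega}^2\ge(\gamma/C_\mL)\norm{w}_{a,\Omega}^2$.

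\emph{Case $\gamma<0$.} Then $\gamma\norm{w}_{1,\Omega}^2\ge(\gamma/\alpha)\norm{w}_{a,\Omega}^2$.

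Since $\alpha\le C_\mL$, both constants are at least $\nu=\min\{\gamma/C_\mL,\gamma/\alpha\}$. Indeed, for $\gamma\ge 0$ the minimum is $\gamma/C_\mL$, and for $\gamma<0$ it is $\gamma/\alpha$. In either case we obtain
\begin{gather*}
  \tfrac{\D}{\D t}\norm{w(t)}_{a,\Omega}^2 \le -2\nu\norm{w(t)}_{a,\Omega}^2 .
\end{gather*}
Gronwall's lemma then yields $\norm{w(t)}_{a,\Omega}\le \E^{-\nu t}\norm{v_0}_{a,\Omega}$.

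Finally we convert back to the $H^1$ norm:
\begin{gather*}
  \norm{S(t)v_0}_{1,\Omega}\le\alpha^{-1/2}\norm{w(t)}_{a,\Omega}
  \le\alpha^{-1/2}\E^{-\nu t}\norm{v_0}_{a,\Omega}
  \le\sqrt{C_\mL/\alpha}\,\E^{-\nu t}\norm{v_0}_{1,\Omega}.
\end{gather*}
Taking the supremum over $\norm{v_0}_{1,\Omega}=1$ gives \eqref{eq-bound-c0}.

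The only delicate step is the sign case distinction when bounding $c\scal{w}{w}$ from below in terms of the $a$-norm; this is exactly what produces the minimum in the definition of $\nu$. The regularity needed to differentiate $\norm{w}_{a,\Omega}^2$ is automatic, because the semigroup is uniformly continuous.
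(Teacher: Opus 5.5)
Your proof is correct, but it takes a different route from the paper.

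The paper splits into two cases.
\begin{itemize}
  \item For $\gamma>0$ it simply cites Showalter--Ting for the rate $\gamma/C_\mL$.
  \item For $\gamma\le 0$ it works in the Hilbert space $V_a=\bigl(H^1_0(\Omega),a\scal{\cdot}{\cdot}\bigr)$. It derives the resolvent bound $\norm{(\lambda+\iLM)^{-1}g}_{a,\Omega}\le\norm{g}_{a,\Omega}/(\lambda+\gamma/\alpha)$ for $\lambda>-\gamma/\alpha$ from Lax--Milgram and the Riesz isometry $\norm{\mL g}_{-a,\Omega}=\norm{g}_{a,\Omega}$. It then invokes the Hille--Yosida theorem to get $\Opnorm{S(t)}_a\le\E^{-\gamma t/\alpha}$.
\end{itemize}

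You instead use the dissipativity inequality $a\scal{\iLM w}{w}=c\scal{w}{w}\ge\nu\norm{w}_{a,\Omega}^2$ directly along the orbit and close with Gronwall. This is the same coercivity estimate that yields the paper's constant $\lambda+\gamma/\alpha$, but you feed it into an ODE argument rather than a resolvent bound.

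What your version buys:
\begin{itemize}
  \item It is self-contained, with no external citation needed for $\gamma>0$.
  \item It treats both signs of $\gamma$ in one stroke.
  \item It shows transparently why the minimum appears in $\nu$.
\end{itemize}
It relies on differentiating $t\mapsto\norm{S(t)v_0}_{a,\Omega}^2$, which is harmless here because the generator is bounded. It also uses the symmetry of $a$, which the paper's approach needs as well.

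The paper's generator/resolvent formulation is the one that extends to unbounded generators without a density argument. In the present bounded setting, however, your energy argument is the more economical proof. It also transfers verbatim to the adjoint semigroup $S^*$ used for $G_\chi$, since $\dual{\mMa w}{w}=c\scal{w}{w}$.
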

\begin{proof}
  If \mbox{$\gamma > 0$}, then
  \begin{gather}\label{eq-bound-c0-gamma-greater}
    \Opnorm{S(t)}_1 \leq \sqrt{\frac{C_\mL}{\alpha}}
      \E^{-\frac{\gamma}{C_\mL}t}, \quad t\geq0, \quad
      \text{see \cite{ting1970sobolev}}.
  \end{gather}

  If \mbox{$\gamma \leq 0$} we use the techniques
  presented in \cite{ting1969sobolev}. 
  Consider the \textsc{Hilbert} space
  \mbox{$V_a \coloneqq \bigl(H^1_0(\Omega), a\scal{\cdot}{\cdot}\bigr)$}.
  The norms \mbox{$\norm{\cdot}_{1, \Omega}$}
  and \mbox{$\norm{\cdot}_{a,\Omega}$} are equivalent.
  Therefore, the operator $-\iLM$ is bounded and closed with respect
  to $\Opnorm{\cdot}_{a}$.
  Now, let $\lambda\in\RR$ be arbitrary,
  but fixed with \mbox{$\lambda > -\gamma/\alpha$}.
  For any given \mbox{$g\in V_a$}, consider the problem of finding
  \mbox{$w \in V_a$} such that
  \begin{gather*}
    \left(\lambda + \iLM\right)w = g \quad \text{or equivalently} \quad	\left(\lambda \mL + \mM\right)w = \mL g.
  \end{gather*}
  The bilinear form
  \mbox{$\lambda a\scal{\cdot}{\cdot} + c\scal{\cdot}{\cdot}$}
  is both bounded and coercive with coercivity constant
  \mbox{$\lambda + \gamma/\alpha>0$}.
  Consequently, the operator
  \mbox{$\left(\lambda + \iLM\right)\colon V_a \mapsto V_a$} is invertible
  and we obtain by means of the
  \textsc{Lax-Milgram} Lemma and by the \textsc{Fréchet-Riesz} Theorem,
  \begin{gather*}
    \norm{\left(\lambda + \iLM\right)^{-1}g}_{a,\Omega}
      = \norm{\left(\lambda \mL + \mM\right)^{-1}\mL g}_{a,\Omega}
      \leq \frac{\norm{\mL g}_{-a,\Omega}}{\lambda + \gamma/\alpha }
      = \frac{\norm{g}_{a,\Omega}}{\lambda + \gamma/\alpha},
  \end{gather*}
  where $\norm{\cdot}_{-a,\Omega}$ is the norm on $V^*_a$, the dual of $V_a$.
  Next, since \mbox{$-\iLM$} is closed and densely defined,
  we can apply the \textsc{Hille-Yosida} Theorem
  \cite[Corollary 2.3.6]{engel2000semigroups}, concluding that 
  \begin{gather*}
    \Opnorm{S(t)}_a \leq \E^{-\frac{\gamma}{\alpha}t}, \quad t\geq0.
  \end{gather*}
  It follows that
  \begin{gather}\label{eq-bound-c0-gamma-lower}
    \Opnorm{S(t)}_1 \leq \sqrt{\frac{C_\mL}{\alpha}}
                         \E^{-\frac{\gamma}{\alpha}t}, \quad t\geq0,
  \end{gather}
  because \mbox{$\alpha \norm{\cdot}^2_{1,\Omega}
                   \leq \norm{\cdot}^2_{a,\Omega}
                   \leq C_\mL \norm{\cdot}^2_{1,\Omega}$}.

  Combining the bounds~\eqref{eq-bound-c0-gamma-greater} and~\eqref{eq-bound-c0-gamma-lower},
  we obtain the proposition of the lemma.
\end{proof}

For every \mbox{$\phi \in L^p\bigl(0,T; H^1_0(\Omega)\bigr)$} one has
\mbox{$S(T-\cdot)\,\phi(\cdot) \in L^p(0,T;H^1_0(\Omega))$},
cf. \cite[Prop. C.2, C.3]{engel2000semigroups}.
Therefore, the inequality
\begin{gather*}
  \norm{\int_{0}^{T} S(T-s)\phi(s) \IntD s}_{1,\Omega}
    \leq \int_{0}^{T} \norm{S(T-s)\phi(s)}_{1,\Omega} \D s
\end{gather*} 
holds (cf. \cite[Theorem II.4]{diestel1977measures}).
From this inequality, \autoref{lem-bound-c0} and the \textsc{Lax-Milgram}
Lemma, we get the following a priori estimate for the solution $u$
of~\eqref{ibvp}:
\begin{gather*}
  \norm{u(t)}_{1,\Omega}
    \leq \eta_{S,1}(t) \norm{u_0}_{1,\Omega}
            + \frac{1}{\alpha} \int_{0}^{t} \eta_{S,1}(T-s)
                                            \norm{F(s)}_{-1,\Omega} \IntD s,
  \quad \forall t\in[0,T].
\end{gather*}

If the data is smoother, we can expect higher regularity for 
the solution \(u\) too.
To be precise, if the domain \(\Omega\) and the coefficients 
of \(\mL\) and \(\mM\) are sufficiently regular, then the operator 
\(\iLM\) is a bounded operator on \(\Hp{\Omega}{2}\) (cf. \cite{ting1970sobolev}). 
Consequently, given \mbox{$u_0 \in \Hp{\Omega}{2}$} and 
\mbox{$F \in C([0,T]; L^2(\Omega))$}
the solution of \eqref{ibvp} satisfies 
\mbox{$u \in C^1\left([0,T]; \in H^1_0(\Omega) \cap H^2(\Omega)\right)$} 
(cf. \cite[Theorem~3.2.3]{tanabe1979evolution}). 

\subsection{An \boldmath$L^2$-norm representation}
\label{ssec-green}

In \autoref{sec-l2-error} we shall derive an a posteriori error bound for
the error at final time $T$ measured in the $L_2(\Omega)$-norm.
The central ingredient is \autoref{lem-green-l2} below.
For any function
\mbox{$v \in H^1\left(0,T;H^1_0(\Omega)\right),$} it provides
a representation of $\norm{v(T)}_{0,\Omega}$ in terms of $\mK v$ and of
its initial value $v(0)$.
The analysis is carried out under the following regularity assumptions. 
\begin{ass}\label{ass-coeffs-l-m}
  Let the coefficients of $\mM$ be sufficiently smooth,
  such that \mbox{$\mM v,\,\mM^* v \in L^2(\Omega)$}
  for all \mbox{$v \in H^2(\Omega)$}.
  Furthermore, assume that the domain $\Omega$ and the coefficients of $\mL$
  are such that for any \mbox{$g \in L^2(\Omega)$},
  the solution \mbox{$y \in H^1_0(\Omega)$} of the elliptic boundary value
  problem \mbox{$\mL y = g$} satisfies \mbox{$u \in H^2(\Omega)$}.
\end{ass} 
\begin{remark}
  The assumption on the domain are for example satisfied if $\Omega$ is convex,
  see \cite{grisvard1985elliptic}.
\end{remark}
For arbitrary \mbox{$\chi\in L^2(\Omega)$} let
\mbox{$G_\chi\colon [0,\infty) \mapsto H^1_0(\Omega)$} be the solution of
the initial value problem
\begin{subequations}\label{eq-green-like}
  \begin{align}
    \mL \pt_tG_\chi(s) + \mMa G_\chi(s) & = 0, \quad s\in [0,\infty)
      \label{eq-green-pde}\\
                         G_\chi(0) & = \iL \chi.
      \label{eq-green-init}
  \end{align}
\end{subequations}
\autoref{ass-coeffs-l-m} guaranties that \mbox{$\iL\chi \in H^2(\Omega)$}
and that \mbox{$\mMa v \in L^2(\Omega) \ \ \forall \ v \in H^2(\Omega)$}.
Consequently, the results of \autoref{subsec-est-semigroup} can be applied,
and there exists
a unique solution \mbox{$G_\chi\in C^1\bigl([0,\infty);\Hp{\Omega}{2}\bigr)$}
of~\eqref{eq-green-like}:
\begin{gather}
  G_\chi(s) = S^*(s) \iL \chi, \quad s\geq 0, \quad
  \text{with} \ \ S^*(s)  \coloneqq \E^{-s\iLMa}.
\end{gather}

\begin{lemma}\label{lem-green-l2}
  For all \mbox{$v \in H^1\bigl((0,T);H_0^1(\Omega)\bigr)$}, we have the
  representation
  \begin{gather*}
    \norm{v(t)}^2_{0, \Omega} = a\scal{G_{v(t)}(t)}{v(0)}
       + \int_{0}^{t} \dual{\mK v(s)}{G_{v(t)}(t-s)} \D s.
  \end{gather*}
\end{lemma}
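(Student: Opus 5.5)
Fix $t\in(0,T]$ and set $\chi \coloneqq v(t)\in H^1_0(\Omega)\subset L^2(\Omega)$. We write $G\coloneqq G_\chi$, the solution of \eqref{eq-green-like}. The plan is a duality argument in the spirit of the Green's function representation for parabolic problems. We differentiate the scalar function
\begin{gather*}
  \psi(s) \coloneqq a\bigl(v(s), G(t-s)\bigr), \quad s\in[0,t],
\end{gather*}
and integrate it from $0$ to $t$.

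The endpoint values are direct. At $s=t$ we have $G(0)=\iL\chi$, so by the symmetry of $a$ and the definition of $\mL$,
\begin{gather*}
  \psi(t) = a\bigl(\iL\chi, v(t)\bigr) = \dual{\chi}{v(t)} = \scal{v(t)}{v(t)} = \norm{v(t)}^2_{0,\Omega}.
\end{gather*}
Again by symmetry, $\psi(0)=a\bigl(G(t),v(0)\bigr)$.

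For the derivative, we use that $v\in H^1(0,t;H^1_0(\Omega))$ is absolutely continuous and $G\in C^1([0,\infty);H^1_0(\Omega))$. Since $a$ is a bounded bilinear form, $\psi$ is absolutely continuous with
\begin{gather*}
  \psi'(s) = a\bigl(\pt_t v(s), G(t-s)\bigr) - a\bigl(v(s), \pt_t G(t-s)\bigr) \quad \text{for a.e. } s.
\end{gather*}
By \eqref{eq-green-pde} we have $a\bigl(\pt_tG(r),w\bigr)=-\dual{\mMa G(r)}{w}=-c\bigl(w,G(r)\bigr)$ for all $w\in H^1_0(\Omega)$. Inserting this with $r=t-s$, $w=v(s)$ and using symmetry of $a$,
\begin{gather*}
  \psi'(s) = a\bigl(\pt_t v(s),G(t-s)\bigr) + c\bigl(v(s),G(t-s)\bigr) = \dual{\mK v(s)}{G(t-s)}.
\end{gather*}
Integrating over $(0,t)$ gives $\psi(t)=\psi(0)+\int_0^t\dual{\mK v(s)}{G(t-s)}\,\D s$, which is exactly the claimed identity.

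The main obstacle is justifying the product rule for $\psi$ when $v$ is only $H^1$ in time. I would handle it by density: approximate $v$ by $C^1([0,t];H^1_0(\Omega))$ functions in the $H^1(0,t;H^1_0)$ norm. Both sides of the identity are continuous in that norm, because point evaluations are controlled by the $H^1(0,t;H^1_0)$ norm and $G(t-\cdot)$ is bounded in $H^1_0$ on $[0,t]$. We also need the adjoint relation $\dual{\mMa w}{z}=c(z,w)$, which follows directly from the definition of $\mMa$ as the operator associated with the transposed form.
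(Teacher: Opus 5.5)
Your proof is correct and takes essentially the same route as the paper. Both integrate $s\mapsto a\bigl(v(s),G(t-s)\bigr)$ by parts over $(0,t)$, eliminate $\pt_t G$ via the adjoint equation \eqref{eq-green-pde}, and use the symmetry of $a$ with $G(0)=\iL\chi$ to identify the $s=t$ endpoint term with $\norm{v(t)}^2_{0,\Omega}$; your version is more careful than the paper's sketch in tracking the sign from differentiating $G(t-s)$ and in justifying the product rule for $v\in H^1(0,t;H^1_0(\Omega))$.
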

\begin{proof}
  Integrating \eqref{eq-green-pde}, we obtain
  \begin{gather*}
    0 = \int_0^t \dual{\mL \pt_s G_{v(t)}(t-s)}{v(s)} \D s
          + \int_{0}^{t} \dual{\mMa G_{v(t)}(t-s)}{v(s)} \D s
  \end{gather*}
  Then, employing \eqref{eq-green-init} and integration by parts
  yields
  \begin{align*}
    \norm{v(t)}^2_{0,\Omega}
      & = \dual{v(t)}{v(t)} = \dual{\mL G_{v(t)}(0)}{v(t)} \\
      & = \dual{\mL G_{v(t)}(t)}{v(0)}
             + \int_{0}^{t} \dual{\mL \pt_s v(s)}{G_{v(t)}(t-s)} \IntD s
             + \int_{0}^{t} \dual{\mM v(s)}{G_{v(t)}(t-s)} \D s \\ 
      & = a\scal{G_{v(t)}(t)}{v(0)}
             + \int_{0}^{t} \dual{\mK v(s)}{G_{v(t)}(t-s)} \D s.
  \end{align*} 

  \vspace*{-4ex}
\end{proof}
When deriving our a posteriori error estimator in \autoref{sec-l2-error},
the following bounds for \mbox{$\norm{G_\chi(s)}_{1,\Omega}$}
and \mbox{$\norm{G_\chi(s)}_{2,\Omega}$} will be used.
\begin{lemma}\label{lem:green:estimates}
  There are constants \mbox{$M_2, \omega_2 \in \RR$},
  \mbox{$M_2 > 0$}, such that
  \begin{align}
    \norm{G_\chi(s)}_{1,\Omega}
      & \leq \eta_{S,1}(t) 
             \frac{\norm{\chi}_{0,\Omega}}{\alpha}
             \quad \forall s \in [0,\infty) \label{eq:G-1}\\
    \intertext{and}
    \norm{G_\chi(s)}_{2,\Omega}
      & \leq \eta_{S,2}(s)
  	     \Opnorm{\iL}_{0,2} \norm{\chi}_{0,\Omega}
             \quad \forall s \in [0,\infty) \label{eq:G-2}
  \end{align}
  with $\eta_{S,1}$ from \autoref{lem-bound-c0} and
  \begin{gather*}
    \eta_{S,2}(s) \coloneqq M_2 \E^{t\omega_2}
         \quad \forall t \in [0,\infty).
  \end{gather*}
\end{lemma}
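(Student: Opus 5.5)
The plan is to use the representation $G_\chi(s) = S^*(s)\iL\chi$ and bound the semigroup and the initial datum separately in each norm.

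\emph{The $H^1$ bound \eqref{eq:G-1}.} The semigroup $S^*$ is generated by $-\iLMa$. Here $\mMa$ is the operator associated with the adjoint bilinear form $c^*(w,v) = c(v,w)$. This form satisfies the same bound $C_\mM$ and the same lower constant $\gamma$, since $c^*(w,w) = c(w,w)$. The proof of \autoref{lem-bound-c0} uses only \eqref{ibvp-ass} and the G\aa rding-type constant $\gamma$, and never the symmetry of $c$. It therefore carries over verbatim to $\mMa$ and gives $\Opnorm{S^*(s)}_1 \le \eta_{S,1}(s)$. (The $t$ in \eqref{eq:G-1} should read $s$.) It remains to bound the initial datum:
\begin{gather*}
  \norm{\iL\chi}_{1,\Omega} \le \Opnorm{\iL}_{-1,1}\norm{\chi}_{-1,\Omega} \le \frac{1}{\alpha}\norm{\chi}_{0,\Omega}.
\end{gather*}
This uses $\norm{\chi}_{-1,\Omega}\le\norm{\chi}_{0,\Omega}$, which holds by the Poincaré-free estimate $\abs{(\chi,v)} \le \norm{\chi}_{0,\Omega}\norm{v}_{0,\Omega} \le \norm{\chi}_{0,\Omega}\norm{v}_{1,\Omega}$. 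Combining the two bounds yields \eqref{eq:G-1}.

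\emph{The $H^2$ bound \eqref{eq:G-2}.} I work on the Banach space $X = \Hp{\Omega}{2}$ with the $\norm{\cdot}_{2,\Omega}$ norm. The key step is to show that $\iLMa$ maps $X$ boundedly into itself. For $v\in X$, \autoref{ass-coeffs-l-m} gives $\mMa v\in L^2(\Omega)$ with $\norm{\mMa v}_{0,\Omega}\le C\norm{v}_{2,\Omega}$. This holds because $\mMa$ is a second-order differential operator with smooth coefficients; if needed, the bound follows from the closed graph theorem. The elliptic regularity part of \autoref{ass-coeffs-l-m} says $\iL\colon L^2(\Omega)\to X$. This map is bounded by the closed graph theorem, since $\iL$ is already bounded into $H^1_0(\Omega)$, so $\Opnorm{\iL}_{0,2}<\infty$. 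Hence
\begin{gather*}
  B := \iLMa \in \mathcal{L}(X), \qquad \Opnorm{B}_2 \le \Opnorm{\iL}_{0,2}\,\Opnorm{\mMa}_{2,0}.
\end{gather*}

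A bounded generator gives a uniformly continuous semigroup $\E^{-sB}$ on $X$ with
\begin{gather*}
  \Opnorm{\E^{-sB}}_2 \le \E^{s\Opnorm{B}_2}.
\end{gather*}
More generally, any $C_0$ semigroup satisfies $\Opnorm{S^*(s)}_2\le M_2\E^{\omega_2 s}$ by the standard growth bound (Pazy, Thm.~1.2.2). Moreover, $\E^{-sB}$ agrees on $X$ with $S^*(s)$ restricted from $H^1_0(\Omega)$, because both are given by the same exponential power series and the $H^2$ norm dominates the $H^1$ norm. One may thus take $M_2=1$ and $\omega_2=\Opnorm{B}_2$, or any growth-bound pair. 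Then
\begin{gather*}
  \norm{G_\chi(s)}_{2,\Omega} \le \eta_{S,2}(s)\norm{\iL\chi}_{2,\Omega} \le \eta_{S,2}(s)\Opnorm{\iL}_{0,2}\norm{\chi}_{0,\Omega}.
\end{gather*}

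The main obstacle is justifying that $\iLMa$ is bounded on $\Hp{\Omega}{2}$, that is, that $\Opnorm{\mMa}_{2,0}$ and $\Opnorm{\iL}_{0,2}$ are finite. I would handle both via the closed graph theorem applied to the operators given by \autoref{ass-coeffs-l-m}, as the paper does for $\iLM$ citing \cite{ting1970sobolev}. Identifying the $H^2$ semigroup with the restriction of $S^*$ then follows from the series representation.
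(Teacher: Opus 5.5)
Your proof is correct. For \eqref{eq:G-1} it is the paper's argument: the adjoint form has the same bound $C_\mM$ and the same lower constant $\gamma$, so the proof of \autoref{lem-bound-c0} carries over, and $\norm{\iL\chi}_{1,\Omega}\le\norm{\chi}_{0,\Omega}/\alpha$. You are also right that the $t$ should read $s$.

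For \eqref{eq:G-2} you take a somewhat different route. The paper gets $\Opnorm{\iL}_{0,2}<\infty$ from the bounded inverse theorem, which is equivalent to your closed-graph step. It then cites Showalter--Ting for the fact that the semigroup is $C_0$ on $\Hp{\Omega}{2}$, and invokes the abstract exponential growth bound, leaving $M_2,\omega_2$ unspecified.

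You instead use both parts of \autoref{ass-coeffs-l-m} to show directly that $\iLMa$ is a bounded operator on $\Hp{\Omega}{2}$. The semigroup there is then uniformly continuous, and via the power series it coincides with the restriction of $S^*(s)$. This gains you two things:
\begin{itemize}
\item The argument is self-contained and treats $S^*$ directly. The paper's text states the semigroup property for $S$ and then applies it to $S^*$; your use of the $\mMa$ part of the assumption is exactly what justifies that step.
\item You get explicit constants $M_2=1$ and $\omega_2=\Opnorm{\iLMa}_2\le\Opnorm{\iL}_{0,2}\Opnorm{\mMa}_{2,0}$. This is of some interest, since the authors remark in \autoref{sec-num-ex} that these constants are not easily determined.
\end{itemize}

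The price is that your $\omega_2$ is never negative, so the resulting $\eta_{S,2}$ always grows exponentially. The abstract growth bound could give a decaying bound when the $H^2$ semigroup is stable. As you note, any growth-bound pair may be substituted.
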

\begin{proof}
  Owing to the definition of \(G_\chi\), we have
  \begin{gather*}
    \norm{G_\chi(s)}_{1,\Omega}
      = \norm{S^*(s) \iL \chi}_{1,\Omega}
     \leq \Opnorm{S^*(s)}_1 \norm{\iL \chi}_{1,\Omega}\,.
  \end{gather*}
  Since \mbox{\(L^2(\Omega) \subset H^{-1}(\Omega)\)} -- in the sense of
  \textsc{Gelfand} triples -- we infer from the \textsc{Lax-Milgram} Lemma
  that
  \begin{gather*}
    \norm{\iL \chi}_{1,\Omega} \leq \frac{\norm{\chi}_{0,\Omega}}{\alpha}\,.
  \end{gather*}
  To prove \eqref{eq:G-1}, it suffices to derive an upper bound on
  \mbox{$\Opnorm{S^*(t)}_1$}.
  Since \mbox{$\dual{\mMa v}{v} = \dual{\mM v}{v} = c\scal{v}{v}$}
  for all \mbox{$v\in H^1_0(\Omega)$}
  and because \mbox{$\Opnorm{\mM^*}_{1,-1} = \Opnorm{\mM}_{1,-1}$},
  the bilinear form associated with the operator
  \mbox{\(\lambda \mL + \mMa\colon H^1_0(\Omega) \to H^{-1}(\Omega)\)}
  is bounded and coercive for all
  \mbox{$\lambda > -\min\left\{\gamma/C_\mL, \gamma/\alpha\right\}$}.
  Proceeding as in \autoref{subsec-est-semigroup}, we conclude that
  \begin{gather}\label{eq-c0-bound-green}
    \Opnorm{S^*(t)}_1 \leq \eta_{S,1}(t), \quad \forall t \in [0,\infty),
  \end{gather}
  with $\eta_{S,1}$ from \autoref{lem-bound-c0}.
  
  Bounding $\norm{G_\chi(s)}_{2,\Omega}$ is slightly more delicate.
  The boundedness of \mbox{\(\mL\)} and \autoref{ass-coeffs-l-m} imply that
  the restriction of \(\mL\) onto \(\Hp{\Omega}{2}\) is a bounded and
  bijective operator from 
  \mbox{\(\Hp{\Omega}{2}\)} onto \(L^2(\Omega)\).
  Thus, an application of the Bounded Inverse Theorem yields that the
  restriction of \(\iL\) onto \(L^2(\Omega)\) is bounded operator
  onto \(\Hp{\Omega}{2}\).
  Furthermore, the operators 
  \begin{gather*}
     S(t)\colon \Hp{\Omega}{2} \to \Hp{\Omega}{2}, \quad t\ge 0,
  \end{gather*}
  form a \(C_0\)~semigroup on \(\Hp{\Omega}{2}\), cf. \cite{ting1970sobolev}.
  Hence, there exist constants \(M_2>0\) and \mbox{\(\omega_2 \in \RR\)},
  such that -- see \cite[Proposition I.5.5]{engel2000semigroups} --
  \begin{gather*}
    \Opnorm{S^*(s)}_{2} \leq \eta_{S,2}(s) \coloneqq M_2 \E^{\omega_2s},
      \quad s\geq 0.
  \end{gather*} 
  We infer that
  \begin{align*}
    \norm{G_\chi(s)}_{2,\Omega} = \norm{S^*(s)\iL \chi}_{2,\Omega}
      \leq \norm{S^*(s)}_2 \norm{\iL \chi}_{2,\Omega}
      \leq \eta_{S,2}(s) \Opnorm{\iL}_{0,2} \norm{\chi}_{0,\Omega}	
  \end{align*}

  \vspace*{-2ex}
\end{proof}

\section{Elliptic reconstruction for pseudo-parabolic problems}
\label{sec-ell-rec}

The concept of \emph{elliptic reconstruction} was introduced 
in \cite{nochetto2003reconstrution}.
It allows the use of a posteriori error analysis for elliptic problems
as a building block in the derivation of a posterior error estimators for
parabolic problems.
In that paper -- and a number of follow up publications -- the operator
$\m{M}$ was used to define the elliptic recontruction.
However, in the pseudo-parabolic problem~\eqref{ibvp} the operator $\m{M}$
lacks coercivity and is therefore unsuitable to define the reconstruction.
Instead, we shall employ the operator $\mL$ in order to make elliptic
reconstruction applicable to pseudo-parabolic problems. 

\subsection{A posteriori error bound for elliptic problems}
\label{ssect-apost-ell}

Given \mbox{$G\in H^{-1}(\Omega)$}, consider the elliptic boundary-value
problem of finding \mbox{$y \in H^1_0(\Omega)$} such that
\begin{gather}\label{eq-ell}
  a\scal{y}{v} = \dual{G}{v} \ \ \forall v \in H^1_0(\Omega).
\end{gather}
Next, given a finite dimensional subspace $V^0_h$ of $H_0^1(\Omega)$ we
seek an approximation \mbox{$y_h \in V^0_h$} of $y$ as the solution of
\begin{gather}\label{eq-ell-disc}
  a_h\scal{y_h}{v_h} = \dual{G}{v_h}_h \ \ \forall v_h \in V^0_h,
\end{gather}
where $a_h\scal{\cdot}{\cdot}$ and $\dual{\cdot}{\cdot}_h$ are approximations
of $a\scal{\cdot}{\cdot}$ and $\dual{\cdot}{\cdot}$, resp.

Our a posteriori analysis for the pseudo-parabolic problem relies on the
following assumption.
\begin{ass}\label{ass-ell-est}
  Let $y$ and $y_h$ be the solutions of \eqref{eq-ell} and \eqref{eq-ell-disc},
  respectively.
  We assume there exists an a posteriori error estimator $\eta$,
  depending on $G$ and $y_h$ only, such that
  \begin{gather*}
    \norm{y-y_h}_{1,\Omega} \leq \eta\left(y_h, G\right).
  \end{gather*}
\end{ass}

\subsection{Elliptic reconstruction}

We consider arbitrary time stepping procedures for \eqref{ibvp}.
To this end, let $\omega_t$ be a mesh in time given by
\begin{gather*}
  \omega_t\colon 0 = t_0 < \cdots < t_M = T,
\end{gather*} 
with mesh intervals \mbox{$I_j \coloneqq (t_{j-1}, t_j)$} and
step sizes \mbox{$\tau_j \coloneqq t_j - t_{j-1}$}, \mbox{$j=1,\dots,M$}.
The midpoints of the mesh intervals are denoted by
\mbox{$t_{j-1/2}=\bigl(t_{j-1}+t_j\bigr)/2$}.
For any function \mbox{$v\colon \Omega \times \omega_t \mapsto \RR$}
we use the abbreviations \mbox{$v^j \coloneqq v(\cdot, t_j)$}
and \mbox{$v^{j-1/2} \coloneqq v(\cdot, t_{j-1/2})$},
\mbox{$j=1,\dots,M$}.
Furthermore, let
\begin{gather*}
  \delta_t v^j \coloneqq \frac{v^j - v^{j-1}}{\tau_j}, \quad j=1,\dots,M.
\end{gather*}
As mentioned in the beginning, the framework presented in the sequel,
is not specific to a particular time discretization.

Let again $V^0_h$ be a finite dimensional subspace of $H^1_0(\Omega)$, and let
\mbox{$u^0_h, u^1_h,\dots, u^M_h \in V^0_h$} be approximations
of \mbox{$u(t_0), u(t_1), \dots, u(t_M)$}.
We assume the latter are obtained be a finite element
method using the approximations
$a_h\scal{\cdot}{\cdot}$, $c_h\scal{\cdot}{\cdot}$,
$\scal{\cdot}{\cdot}_h$ and $\dual{\cdot}{\cdot}_h$ of the continuous
forms and pairings
$a\scal{\cdot}{\cdot}$, $c\scal{\cdot}{\cdot}$, $\scal{\cdot}{\cdot}$
and $\dual\cdot\cdot$, resp.

To derive our a posteriori error bounds, we need to extend these
pointwise approximations to a function $\tilde{u}_h$ defined on
the whole time interval $[0,T]$.
We shall use piecewise linear interpolation:
For any function $v$ defined on $\omega_t$, $t_j \mapsto v^j$, let
\begin{gather}\label{eq-int}
  \tilde{v}(s) \coloneqq \frac{v^j+v^{j-1}}{2} + (s-t_{j-1/2}) \delta_t v^j,
    \quad s \in \bar{I}_j, \quad j=1,\dots,M. 
\end{gather}
Due to this construction we
have \mbox{$\tilde{u}_h \in W^{1,2}\left(0,T; H^1_0(\Omega)\right)$}. 

Next, given an approximation \mbox{$u^j_h \in V^0_h$}
of \mbox{$u(t_j)\in H^1_0(\Omega)$},
we define \mbox{$\psi^j_h \in V^0_h$} by
\begin{gather}\label{eq-def-psi}
  a_h\scal{\psi_h^j}{v_h} = c_h\scal{u^j_h}{v_h} - \dual{F^j}{v_h}_h
      \quad \forall v_h \in V^0_h, \quad j=0,\dots,M.
\end{gather} 
In view of~\eqref{ibvp-vari}, $\psi_h^j$ may be regarded as an approximation
to $\pt_t u(t^j)$.
Eq.~\eqref{eq-def-psi} can be rewritten as follows:
\begin{gather}
  a_h\scal{u^j_h}{v_h} = (a_h + c_h)\scal{u^j_h}{v_h} - \dual{F^j}{v_h}_h
     - a_h\scal{\psi^j_h}{v_h} \quad \forall v_h \in V^0_h, \quad j=0,\dots,M.
\end{gather}
Now, let \mbox{$R^j \in H^1_0(\Omega)$} be
the solution of the elliptic problem
\begin{gather}\label{eq-ell-def}
  a\scal{R^j}{v} = \dual{\Phi^j}{v} 
     \quad \forall v \in H_0^1(\Omega), \quad j=0,\dots,M.
  \intertext{where the functional $\Phi^j\in H^{-1}(\Omega)$ is defined by}
    \notag
    \dual{\Phi^j}{v} = (a+c)\scal{u_h^j}{v} - \dual{F^j}{v} - a\scal{\psi^j_h}{v}
     \quad \forall v \in H_0^1(\Omega).
\end{gather}
In operator notation this reads
\begin{gather}\label{eq-ell-def-op}
  \mL R^j = \bigl(\mL + \mM\bigr)\, u_h^j - F^j - \mL \psi_h^j.
\end{gather}
Coercivity and boundedness of $a(\cdot,\cdot)$
imply the existence and uniqueness of \mbox{$R^j \in H^1_0(\Omega)$}, which
is referred to as the elliptic reconstruction of \mbox{$u^j_h \in V^0_h$}.
The latter can be regarded as the finite-element solution of $R^j$.
Therefore, the a posteriori error estimator $\eta$ from \autoref{ass-ell-est}
can be applied to obtain an upper bound for the difference between
$R^j$ and $u^j_h$:
\begin{subequations}
\begin{gather}\label{eq-erro-ell-rec}
  \norm{R^j - u^j_h}_{a,\Omega} \leq \eta_{\mathrm{ell}}^j
    \coloneqq \eta\left(u^j_h, \Phi^j\right), \quad j=0,\dots,M.
\end{gather}
Because of linearity and since the operators $\mL$ and $\mM$
are both independent of time, we also have 
\begin{gather}\label{eq-half-err-ell-rec}
  \norm{\frac{R^j + R^{j-1}}{2} -
        \frac{u^j_h + u^{j-1}_h}{2}}_{a,\Omega}
    \leq \eta_{\mathrm{ell}, 1/2}^j
    \coloneqq \eta\left(\frac{u^j_h + u^{j-1}_h}{2}, \frac{\Phi^j + \Phi^{j-1}}{2}
                  \right), \quad j=1,\dots,M,
\end{gather}
and
\begin{gather}\label{eq-delta-err-ell-rec}
  \norm{\delta_t\left(R^j - u^j_h\right)}_{a,\Omega}
    \leq \eta_{\mathrm{ell}, \delta_t}^j
    \coloneqq \eta\left(\delta_tu^j_h, \delta_t \Phi^j\right),
      \quad j=1,\dots,M.
\end{gather}
\end{subequations}
	
\section{A posteriori error bounds in \boldmath$H^1(\Omega)$}
\label{sec-h1-error}

In this section we will derive an a posteriori bound for
the error at final time \mbox{$T=t_M$}, i.\,e.,
\begin{gather*}
  \norm{u(T)-u_h^M}_{1,\Omega}.
\end{gather*}
To this end, let
\begin{gather}\label{eq-def-g}
  g(s) \coloneqq S(T-s)\left(u-\tilde{u}_h\right)(s) \quad s\in [0,T].
\end{gather} 
and note that $g(T)=u(T)-u_h^M$.
Owing to the properties of $\iLM$ and $(S(t))_{t\geq0}$,
the function $g$ is continuous on \mbox{$[0,T]$} and continuously differentiable
on \mbox{$[0,T]\setminus \omega_t$} with
\begin{gather*}
  g' (s) = \iLM \: S(T-s) \left(u - \tilde{u}_h\right)(s)
              + S(T-s)\pt_s \left(u - \tilde{u}_h\right)(s),
      \quad s\in [0,T] \setminus \omega_t.
\end{gather*}
We use~\eqref{ibvp-transform-de} to eliminate $\pt_s u$, integrate over
$(0,T)$ and arrive at
\begin{gather*}
  g(T) - g(0) = \int_0^T S(T-s)
     \left(\iL F(s) - \iLM \tilde{u}_h(s) - \pt_s \tilde{u}_h(s)\right) \IntD s.
\end{gather*}
Next, \eqref{eq-def-g} gives
\begin{gather*}
  \bigl(u-u_h\bigr)(T)
     = S(T)\left(u_0-u^0_h\right)
       + \int_{0}^T S(T-s) \left( \iL F - \pt_s\tilde{u}_h - \iLM \tilde{u}_h
                           \right)(s) \, \D s.
\end{gather*}
Eq.~\eqref{eq-ell-def-op} implies that
$ \iL \mM \tilde{u}_h = R^j - \tilde{u}_h + \iL \tilde{F} + \tilde{\psi}_h $.
Therefore,
\begin{gather}\label{eq-u-tild-u-T-Euler}
  \begin{split}
    \bigl(u-u_h\bigr)(T)
       &= S(T)\left(u_0-u^0_h\right)
              + \int_{0}^T S(T-s)\iL\left(F-\tilde{F}\right)(s) \, \D s  \\
       & \qquad - \int_{0}^T S(T-s) \left(\tilde{R}-\tilde{u}_h
                                    \right)(s) \, \D s 
                - \int_{0}^T S(T-s) \left(\tilde{\psi}_h(s) + \pt_s\tilde{u}_h(s)
                                    \right) \D s,
  \end{split}
\end{gather}
where the integrals have to be understood in the \textsc{Bochner} sense.

\begin{theorem}\label{theo-h1-est}
  Let $u$ be the solution of \eqref{ibvp} and let \mbox{$u^0_h, \dots, u^M_h \in V^0_h$}
  be a sequence of approximations of \mbox{$u(t_0), \dots, u(t_M)$}.
  Then
  \begin{gather}
    \norm{u(T) - u^M_h}_{1,\Omega} \leq \eta_{H^1}
      \coloneqq \eta_{\mathrm{init}} + \eta_F + \eta_{\Psi}
                   + \eta_{\delta \psi} + \eta_{\mathrm{ell}},
  \end{gather}
  where
  \begin{gather*}
    \eta_{\mathrm{init}}
      \coloneqq \eta_{S,1}(T) \norm{u_0-u^0_h}_{1,\Omega}, \quad
    \eta_F \coloneqq \sum_{j=1}^{M} \frac{\sigma_j}{\alpha} \int_{I_j} \norm{\left(F-\tilde{F}\right)(s)}_{-1,\Omega} \D s \\
    \eta_{\Psi}
      \coloneqq \sum_{j=1}^{M} \sigma_j \tau_j \norm{\frac{\psi^j_h + \psi^{j-1}_h}{2} + \delta_tu^j_h}_{1,\Omega}, \quad
    \eta_{\delta \psi}
      \coloneqq \sum_{j=1}^{M} \sigma_j \chi_j \norm{\delta_t \psi^j_h}_{1,\Omega}
    \intertext{and}
    \eta_{\mathrm{ell}} \coloneqq \sum_{j=1}^{M} \sigma_j \left(\tau_j \eta_{\mathrm{ell}, 1/2}^j
                + \frac{\tau_j^2}{2} \eta_{\mathrm{ell}, \delta_t}^j\right).
  \end{gather*}
  with $\eta_{S,1}$ as defined in \autoref{lem-bound-c0},
  \begin{gather*}
    \sigma_j \coloneqq \max_{s\in [t_{j-1}, t_j]} \eta_{S,1}(T-s) \quad \text{and} \quad \chi_j \coloneqq \min\left\{\frac{\tau_j^2}{4}, \frac{C_\mM}{\alpha} \frac{\tau_j^3}{12}\right\}.
  \end{gather*}
\end{theorem}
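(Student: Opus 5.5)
Starting from the error representation \eqref{eq-u-tild-u-T-Euler}, we take the $H^1$-norm and bound the four terms on the right one by one. Each time we move the norm inside the Bochner integral, split $(0,T)$ into the intervals $I_j$, and use \autoref{lem-bound-c0} in the form $\Opnorm{S(T-s)}_1 \le \eta_{S,1}(T-s)\le\sigma_j$ for $s\in \bar I_j$. The initial term gives $\eta_{\mathrm{init}}$ directly. For the data term we add $\Opnorm{\iL}_{-1,1}\le 1/\alpha$, which yields $\eta_F$.

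\emph{Elliptic reconstruction term.} On $\bar I_j$ the functions $\tilde R$ and $\tilde u_h$ are built by the linear interpolation \eqref{eq-int}. Hence
\begin{gather*}
  (\tilde R-\tilde u_h)(s)=\frac{(R^j-u^j_h)+(R^{j-1}-u^{j-1}_h)}{2}+(s-t_{j-1/2})\,\delta_t(R^j-u^j_h).
\end{gather*}
We apply the triangle inequality, together with \eqref{eq-half-err-ell-rec} and \eqref{eq-delta-err-ell-rec}, and use $\int_{I_j}|s-t_{j-1/2}|\,\D s=\tau_j^2/4\le\tau_j^2/2$. This gives $\eta_{\mathrm{ell}}$. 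The bounds \eqref{eq-half-err-ell-rec}, \eqref{eq-delta-err-ell-rec} are stated in $\norm{\cdot}_{a,\Omega}$, and passing to $\norm{\cdot}_{1,\Omega}$ costs a factor $1/\sqrt\alpha$. I would either absorb this factor in the convention for $\eta$ or note it explicitly; the statement seems to suppress it.

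\emph{Time-derivative term.} This is the main step. On $I_j$ we have $\pt_s\tilde u_h=\delta_t u^j_h$ and
\begin{gather*}
  \tilde\psi_h(s)=\frac{\psi^j_h+\psi^{j-1}_h}{2}+(s-t_{j-1/2})\,\delta_t\psi^j_h .
\end{gather*}
The constant part gives $\sigma_j\tau_j\norm{\frac{\psi^j_h+\psi^{j-1}_h}{2}+\delta_t u^j_h}_{1,\Omega}$, which is $\eta_\Psi$. The linear part $\int_{I_j}S(T-s)(s-t_{j-1/2})\,\delta_t\psi^j_h\,\D s$ is bounded in two ways, and we take the minimum of the two bounds.

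\textbf{(i) Crude bound.} Bound the integrand in norm. This gives $\sigma_j\frac{\tau_j^2}{4}\norm{\delta_t\psi^j_h}_{1,\Omega}$.

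\textbf{(ii) Cancellation bound.} Exploit that $s-t_{j-1/2}$ has zero mean on $I_j$, and write
\begin{gather*}
  S(T-s)=S(T-t_{j-1/2})+\bigl(S(T-s)-S(T-t_{j-1/2})\bigr).
\end{gather*}
The first piece integrates to zero. For the second we write $S(T-s)-S(T-t_{j-1/2})=\int S(T-r)\,\iLM\,\D r$ between the two times. Using \eqref{eq-bound-lm} and $\eta_{S,1}\le\sigma_j$ on $I_j$, this is bounded by $\sigma_j\frac{C_\mM}{\alpha}|s-t_{j-1/2}|$. Integrating $|s-t_{j-1/2}|^2$ over $I_j$ gives $\tau_j^3/12$.

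Taking the minimum of (i) and (ii) yields $\sigma_j\chi_j\norm{\delta_t\psi^j_h}_{1,\Omega}$, that is, $\eta_{\delta\psi}$.

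The main obstacle is bound (ii). One must make sure that the semigroup and $\iLM$ commute, and that the monotonicity of $\eta_{S,1}$ allows bounding $\Opnorm{S(T-r)}_1$ by $\sigma_j$ for $r$ between $s$ and $t_{j-1/2}$. This holds because $r\in\bar I_j$. Summing all four contributions then gives the claim.
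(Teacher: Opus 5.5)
Your proposal is correct and follows the paper's proof: the same four-term splitting of \eqref{eq-u-tild-u-T-Euler}, the same crude $\tau_j^2/4$ bounds, and the same minimum of two bounds for the $\delta_t\psi^j_h$ part. Your midpoint-subtraction argument in (ii) is the Fubini form of the paper's integration by parts with the antiderivative $\frac{(t_j-s)(s-t_{j-1})}{2}$ of $s-t_{j-1/2}$; because the resulting kernel is nonnegative it gives exactly the same constant $\tau_j^3/12$ (the $\tau_j^3/6$ midway through the paper is a slip), and your caveat about $\norm{\cdot}_{a,\Omega}$ versus $\norm{\cdot}_{1,\Omega}$ is justified, though the factor disappears if you invoke Assumption~\ref{ass-ell-est} directly, which is already stated in the $H^1$-norm and extends to the averaged and differenced data by linearity.
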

\begin{proof}
  We start from \eqref{eq-u-tild-u-T-Euler} and bound the terms on its
  right-hand side individually.

  \noindent\emph{(i)} \ Application of \autoref{lem-bound-c0} gives
  \begin{gather*}
    \norm{S(T)\left(u_0 - u^0_h\right)}_{1,\Omega} \leq \eta_{S,1}(T) \norm{u_0-u^0_h}_{1,\Omega}.
  \end{gather*}

  \noindent\emph{(ii)} \
  Next,
  \begin{align*}
    & \norm{\int_{0}^{T}S(T-s)\iL\left(F-\tilde{F}\right)(s) \IntD s}_{1,\Omega}
      \le \int_{0}^{T} \Opnorm{S(T-s)}_1 \norm{\iL\left(F-\tilde{F}\right)(s)}_{1,\Omega} \IntD s \\
    & \qquad
      \le \sum_{j=1}^{M} \sigma_j \int_{I_j} \norm{\iL\left(F-\tilde{F}\right)(s)}_{1,\Omega} \IntD s.
  \end{align*}

  \noindent\emph{(iii)} \ We start similar to \emph{(ii)}:
  \begin{align*}
    & \norm{\int_{0}^{T}S(T-s)\left(\tilde{R}-\tilde{u}_h\right)(s) \IntD s}_{1,\Omega}
      \le \sum_{j=1}^{M} \int_{I_j} \eta_{S,1}(T-s) \norm{\left(\tilde{R}-\tilde{u}_h\right)(s)}_{1,\Omega} \IntD s \\
    & \qquad \leq \sum_{j=1}^{M} \sigma_j \int_{I_j}
            \norm{\frac{R^j - u^j_h + R^{j-1} - u^{j-1}_h}{2}+ (s-t_{j-1/2})\delta_t\left(R^j-u^j_h\right)}_{1,\Omega} \IntD s \\
    & \qquad \leq \sum_{j=1}^{M} \sigma_j \left(\int_{I_j} \norm{\frac{R^j - u^j_h + R^{j-1} - u^{j-1}_h}{2}}_{1,\Omega} \IntD s + \int_{I_j} \left|s-t_{j-1/2}\right| \norm{\delta_t\left(R^j - u^j_h\right)}_{1,\Omega} \IntD s \right).
  \end{align*}
  Utilizing \eqref{eq-erro-ell-rec} and \eqref{eq-delta-err-ell-rec}, gives
  \begin{align*}
    \norm{\int_{0}^{T}S(T-s)\iL\left(\tilde{R}-\tilde{u}_h\right)(s) \IntD s}_{1,\Omega}
      \leq \sum_{j=1}^{M} \sigma_j \left(\tau_j \eta_{\mathrm{ell}, 1/2}^j
                     + \frac{\tau_j^2}{4} \eta_{\mathrm{ell}, \delta_t}^j\right).
  \end{align*}

  \noindent\emph{(iv)} \
  For the last term in~\eqref{eq-u-tild-u-T-Euler} we have
  \begin{gather*}
    \tilde{\psi}_h(s) + \pt_s\tilde{u}_h(s)
      = \frac{\psi^j_h+\psi^{j-1}_h}{2} + \delta_t u^j_h
          + \left(s-t_{j-1/2}\right) \delta_t \psi^j_h.
  \end{gather*}
  Thus
  \begin{gather}\label{aux}
    \begin{split}
    & \norm{\int_{I_j} S(T-s) \left(\tilde{\psi}_h
              + \pt_s \tilde{u}_h\right)(s) \IntD s}_{1,\Omega} \\
    & \qquad
      \le \Opnorm{\int_{I_j} S(T-s)\: \D s}_1
          \norm{\frac{\psi^j_h+\psi^{j-1}_h}{2} + \delta_t u^j_h}_{1,\Omega}
          + \Opnorm{\int_{I_j} \bigl(s-t_{j-1/2}\bigr) \: S(T-s) \: \D s}_1
          \norm{\delta_t \psi^j_h}_1.
    \end{split}
  \end{gather}
  In view of~\eqref{eq-bound-c0} we have the bounds
  \begin{gather*}
    \Opnorm{\int_{I_j} S(T-s)\: \D s}_1 \le \tau_j \sigma_j
    \quad\text{and}\quad
    \Opnorm{\int_{I_j} \bigl(s-t_{j-1/2}\bigr) \: S(T-s) \: \D s}_1
      \le \frac{\tau_j^2}{4} \sigma_j.
  \end{gather*}
  Integration by parts and~\eqref{S-exp} yield
  \begin{align*}
    \int_{I_j} \bigl(s-t_{j-1/2}\bigr) \: S(T-s) \: \D s
      = \int_{I_j} \iLM \: \frac{(s-t_j)(s-t_{j-1})}{2} \: S(T-s)\: \D s.
  \end{align*}
  This together with \eqref{eq-bound-lm} gives the alternative bound
  \begin{gather*}
    \Opnorm{\int_{I_j} \bigl(s-t_{j-1/2}\bigr) \: S(T-s) \: \D s}_1
      \le \Opnorm{\iLM}_1 \frac{\tau_j^3}{6} \sigma_j
      \le \frac{C_\mM}{\alpha} \frac{\tau_j^3}{6} \sigma_j\,.
  \end{gather*}
  Thus
  \begin{gather*}
    \Opnorm{\int_{I_j} \bigl(s-t_{j-1/2}\bigr) \: S(T-s) \: \D s}_1
      \le \min\left\{\frac{\tau_j}{4}, \frac{C_\mM}{\alpha} \frac{\tau_j^3}{6}\right\}
              \sigma_j\,.
  \end{gather*}
  Applying this bound to \eqref{aux} and summing for $j=1,\dots,M$,
  we have bounded the last term in~\eqref{eq-u-tild-u-T-Euler}.
\end{proof}

\begin{remark}\label{rem:eta-f}
  The term $\eta_F$ involves the data of our problem~\eqref{ibvp} and
  inevitably needs to be approximated.
  For example one can use the \textsc{Simpson} rule to estimate
  \begin{gather*}
    \int_{I_j} \norm{\left(F-\tilde{F}\right)(s)}_{-1,\Omega} \D s
       \approx \frac{2\tau_j}{3} \norm{\left(F-\tilde{F}\right)(t_{j-1/2})}_{-1,\Omega}
       = \frac{\tau_j}{3} \norm{F^{j-1} - 2 F^{j-1/2} + F^j}_{-1,\Omega}
  \end{gather*}
\end{remark}

\section{A posteriori error bounds in \boldmath$L^2(\Omega)$}
\label{sec-l2-error}

It is well-known that the convergence of conforming finite element methods
in the $L^2$-norm is typically one order of $h$ faster then in the $H^1$-norm.
This asks for an alternative bound on $\norm{u(T)-u_h^M}_{0,\Omega}$ than
that induced by \autoref{theo-h1-est}.

To establish such a result, we leverage the function $G_\chi$,
introduced in \autoref{ssec-green} and our results from
\autoref{subsec-est-semigroup}.
We restrict ourselves to \mbox{$n\leq3$} in the following.
In particular, this guarantees the continuity of $G_\chi$ with respect to the
spatial variable.
However, we have to restrict the scope of our analysis and make further
assumptions.
We assume exact integration,
i.\,e., \mbox{$a_h=a$}, \mbox{$c_h=c$},
\mbox{$\scal{\cdot}{\cdot}_h=\scal{\cdot}{\cdot}$}
and \mbox{$\dual{\cdot}{\cdot}_h=\dual{\cdot}{\cdot}$}.
As a result, the $u^j_h$-s are the \textsc{Galerkin} approximations
of the $R^j$-s, \mbox{$j=0,\dots,M$}.
Furthermore, let $V^0_h$ be a finite-element space,
which is based on a conforming mesh $\mathcal{T}$ of $\Omega$.
Let $\m{T}$ belong to a shape-regular family of tessilations of $\Omega$.
The maximum diameter of all elements in $\m{T}$ is denoted
by $h_{\mathrm{max}}$.

For simplicity of notation set \mbox{\(G \coloneqq G_{(u-\tilde{u}_h)(T)}\)}.
Application of \autoref{lem-green-l2} to \mbox{$v=u-\tilde{u}_h$}, gives
\begin{gather*}
  \bignorm{\left(u-\tilde{u}_h\right)(T)}^2_{0,\Omega}
      = a\scal{G(T)}{u_0 - u^0_h }
           + \int_{0}^{T} \bigdual{\mK \left(u-\tilde{u}_h\right)(s)}{G(T-s)} \ \D s
\end{gather*}
We proceed similar to \autoref{sec-h1-error}.
By \eqref{eq-ell-def},
\begin{gather}\label{eq-res-l2}
  \begin{split}
    \bignorm{\left(u-\tilde{u}_h\right)(T)}^2_{0,\Omega} 
      & = a\scal{G(T)}{u_0 - u^0_h}
            + \int_0^T \dual{\bigl(F - \tilde{F}\bigr)(s)}{G(T-s)} \D s \\ 
      & \quad\ -\int_0^T a\scal{\bigl(\tilde{R}-\tilde{u}_h\bigr)(s)}{G(T-s)} \D s
          - \int_0^T a\scal{\bigl( \tilde{\psi}_h + \pt_s \tilde{u}_h \bigr)(s)}{G(T-s)} \D s.
  \end{split}
\end{gather}
The terms on the right-hand side will be bounded individually to establish
our next main result.
To state (and prove) this result we shall use the following auxilliary
statement.
\begin{lemma}\label{lem:project}
  Let \mbox{$P_h\colon H^2(\Omega) \to V^0_h$} be a projection operator.
  Then there exists a constant $C_I=C_I(P_h)$
  \begin{gather*}
    \norm{v-P_hv}_{1,\Omega} \leq C_I \, h_{\max} \abs{v}_{2,\Omega} \ \ \forall v\in H^2(\Omega),
  \end{gather*}
  where $\abs{\,\cdot\,}_{2,\Omega}$ is the standard semi norm on $H^2(\Omega)$.
\end{lemma}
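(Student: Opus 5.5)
I read the statement as a standard interpolation estimate for a \emph{suitable} projection. It cannot hold for an arbitrary projection onto $V^0_h$: a projection with no locality and no polynomial reproduction could be arbitrarily bad. So I take $P_h$ to be a locally defined operator that reproduces piecewise linear polynomials on each element. Since $n\le 3$, the embedding $H^2(\Omega)\hookrightarrow C(\bar\Omega)$ holds, so the nodal Lagrange interpolant onto the piecewise linears contained in $V^0_h$ is well defined on $H^2(\Omega)$. It preserves homogeneous boundary values and is a projection. The Scott--Zhang operator is an alternative choice. The constant $C_I$ then depends only on this operator and on the shape-regularity parameter of the family of tessellations.

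The proof is the usual element-by-element argument. Fix $K\in\mathcal{T}$ and let $F_K\colon\hat K\to K$, $\hat x\mapsto B_K\hat x+b_K$, be the affine map from the reference element, with $\hat v = v\circ F_K$. Because interpolation commutes with pull-back, $(v-P_hv)|_K\circ F_K=\hat v-\hat P\hat v$, where $\hat P$ is the reference interpolant. The map $\hat v\mapsto \hat v-\hat P\hat v$ is bounded from $H^2(\hat K)$ to $H^1(\hat K)$, since $H^2(\hat K)\hookrightarrow C(\hat K)$. It also vanishes on $\mathbb{P}_1$. The Bramble--Hilbert lemma therefore gives
\begin{gather*}
  \norm{\hat v-\hat P\hat v}_{1,\hat K}\le \hat C\,\abs{\hat v}_{2,\hat K}.
\end{gather*}

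Next I transform back with the standard scaling inequalities:
\begin{gather*}
  \abs{w}_{m,K}\le C\,\norm{B_K^{-1}}^m\abs{\det B_K}^{1/2}\abs{\hat w}_{m,\hat K},
  \qquad
  \abs{\hat w}_{2,\hat K}\le C\,\norm{B_K}^2\abs{\det B_K}^{-1/2}\abs{w}_{2,K},
\end{gather*}
together with $\norm{B_K}\le h_K/\hat\rho$ and $\norm{B_K^{-1}}\le \hat h/\rho_K$. By shape regularity, $h_K/\rho_K\le\kappa$ uniformly over the family. This yields
\begin{gather*}
  \abs{v-P_hv}_{1,K}\le C\kappa\, h_K\abs{v}_{2,K}
  \quad\text{and}\quad
  \norm{v-P_hv}_{0,K}\le C h_K^2\abs{v}_{2,K}.
\end{gather*}
Squaring, summing over $K\in\mathcal{T}$, using $h_K\le h_{\max}$ and $h_{\max}\le\operatorname{diam}\Omega$ to absorb the $L^2$ part, and taking square roots gives the claim. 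The summation is legitimate because $v-P_hv\in H^1(\Omega)$ for a conforming interpolant.

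The main obstacle is the reference-element step. One must ensure that the error map is well defined and bounded on $H^2(\hat K)$, which is where the restriction $n\le3$ enters through point evaluations. One must also ensure that it annihilates $\mathbb{P}_1$. If $V^0_h$ is a higher-order or non-Lagrange space, I would instead use the Scott--Zhang operator. That operator requires only $H^1$ and is $\mathbb{P}_1$-reproducing on element patches, and shape regularity bounds the patch sizes. The same Bramble--Hilbert and scaling argument then applies, now on patches rather than on single elements.
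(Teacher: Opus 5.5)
Your argument is correct and is essentially the paper's. The paper just cites the standard interpolation theorem \cite[Theorem 4.28]{grossmann2007numerical}, whose proof is exactly the Bramble--Hilbert plus affine-scaling argument you reproduce, with $P_h$ taken to be the Lagrange interpolant as in the paper's later remark. Your insistence on a local, $\mathbb{P}_1$-reproducing $P_h$ is justified, and for the same reason the estimate must be read for $v\in H^2(\Omega)\cap H^1_0(\Omega)$, which is the case for $G(T)$. For $v$ with nonzero trace, e.g.\ $v\equiv 1$, no map into $V^0_h$ can satisfy it; your remark that the interpolant preserves homogeneous boundary values implicitly handles this.
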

\begin{proof}
  This follows from \cite[Theorem 4.28]{grossmann2007numerical}.
\end{proof}

\begin{theorem}\label{theo-l2-est}
  Let \autoref{ass-coeffs-l-m} hold true.
  Let $u$ be the solution of \eqref{ibvp} and
  let \mbox{$u^0_h,\dots,u^M_h \in V^0_h$}
  be a sequence of approximations of \mbox{$u(t_0), \dots, u(t_M)$},
  where $u^0_h$ satisfies
  \begin{gather}\label{eq-det-u0h-l2}
    a\scal{u_0 - u^0_h}{v_h} = 0, \quad \forall v_h \in V^0_h.
  \end{gather}
  Then there holds
  \begin{gather}\label{eq-l2-est}
    \norm{u(T)-u^M_h}_{0,\Omega}
      \le \eta_{L^2} \coloneqq \eta_{\mathrm{init}} + \eta_F
            + \eta_{\mathrm{ell}} + \eta_\Psi + \eta_{\delta \psi}
  \end{gather}
  with the various components given by
  \begin{gather*}
    \eta_{\mathrm{init}} \coloneqq
      C^* h_{\max} \: \eta_{S,2}(T) \norm{u_0 - u^0_h}_{1,\Omega}\, , \quad
    \eta_F \coloneqq \sum_{j=1}^{M} \frac{\sigma_j}{\alpha}
                \int_{I_j} \norm{\left(F-\tilde{F}\right)(s)}_{-1,\Omega} \D s \\
    \eta_{\Psi} \coloneqq \sum_{j=1}^{M} C_\mL \frac{\sigma_j}{\alpha} \tau_j
            \norm{\frac{\psi^j_h + \psi^{j-1}_h}{2} + \delta_t u^j_h}_{1,\Omega}\,, \quad
    \eta_{\delta \psi} \coloneqq \sum_{j=1}^{M} C_\mL \frac{\sigma_j}{\alpha} \chi_j \norm{\delta_t \psi^j_h}_{1,\Omega} \\
  \intertext{and}
    \eta_{\mathrm{ell}} \coloneqq C^* h_{\mathrm{max}}\sum_{j=1}^{M}
              \max_{s\in[t_{j-1}, t_j]}
                  \mu_j \left(\tau_j \eta^j_{\mathrm{ell}, 1/2}
                               + \frac{\tau_j^2}{4} \eta^j_{\mathrm{ell}, \delta_t}\right)
  \end{gather*}
  with \mbox{$C^*\coloneqq C_\mL C_I \Opnorm{\iL}_{0,2}\,$}, $\sigma_j$ and $\chi_j$ from \autoref{theo-l2-est},
  $C_I$ from \autoref{lem:project},
  $\eta_{S,2}$ from \autoref{lem:green:estimates} and
  \begin{gather*}
    \mu_j \coloneqq \max_{s\in[t_{j-1}, t_j]} \eta_{S,2}(T-s).
  \end{gather*}
\end{theorem}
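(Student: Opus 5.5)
The plan is to start from the error representation \eqref{eq-res-l2} and bound its four terms on the right-hand side separately. Each bound will have the form $(\text{estimator piece})\cdot\norm{e}_{0,\Omega}$, where $e\coloneqq(u-\tilde u_h)(T)=u(T)-u^M_h$. Dividing by $\norm{e}_{0,\Omega}$ at the end then gives \eqref{eq-l2-est}; the case $e=0$ is trivial. Throughout, $G=G_e$, and \autoref{lem:green:estimates} provides
\begin{gather*}
  \norm{G(T-s)}_{1,\Omega}\le \eta_{S,1}(T-s)\,\norm{e}_{0,\Omega}/\alpha,
  \qquad
  \norm{G(T-s)}_{2,\Omega}\le \eta_{S,2}(T-s)\,\Opnorm{\iL}_{0,2}\norm{e}_{0,\Omega}.
\end{gather*}

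\emph{Terms that use only the $H^1$ bound.} For the data term we estimate $\abs{\dual{(F-\tilde F)(s)}{G(T-s)}}\le\norm{(F-\tilde F)(s)}_{-1,\Omega}\norm{G(T-s)}_{1,\Omega}$. Inserting the bound on $\norm{G(T-s)}_{1,\Omega}$ and taking the maximum of $\eta_{S,1}(T-s)$ over $I_j$, which is $\sigma_j$, yields $\eta_F\norm{e}_{0,\Omega}$. For the $\psi$-term we split the integrand on $I_j$ as in part (iv) of the proof of \autoref{theo-h1-est}, into the constant $\frac{\psi^j_h+\psi^{j-1}_h}{2}+\delta_tu^j_h$ plus $(s-t_{j-1/2})\delta_t\psi^j_h$. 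The constant part is handled by $\abs{a(w,G)}\le C_\mL\norm{w}_{1,\Omega}\norm{G}_{1,\Omega}$, which gives $\eta_\Psi$. For the linear part we bound $\abs{\int_{I_j}(s-t_{j-1/2})\,a(\delta_t\psi^j_h,G(T-s))\,\D s}$. The direct bound gives the factor $\tau_j^2/4$. Alternatively, we integrate by parts against $(s-t_j)(s-t_{j-1})/2$ and use $\pt_s G(T-s)=\iLMa G(T-s)$, which holds by \eqref{eq-green-pde}, together with $\Opnorm{\iLMa}_1\le C_\mM/\alpha$; this gives the $\tau_j^3$ factor. Taking the smaller of the two yields $\chi_j$, and hence $\eta_{\delta\psi}$.

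\emph{Initial and reconstruction terms, where the $h$-gain happens.} Here we use Galerkin orthogonality. By \eqref{eq-det-u0h-l2}, $a(G(T),u_0-u^0_h)=a(G(T)-P_hG(T),u_0-u^0_h)$. \autoref{lem:project} then gives $\norm{G(T)-P_hG(T)}_{1,\Omega}\le C_I h_{\max}\norm{G(T)}_{2,\Omega}$, and with the $H^2$ bound on $G$ the term is at most $C_\mL C_Ih_{\max}\eta_{S,2}(T)\Opnorm{\iL}_{0,2}\norm{e}_{0,\Omega}\norm{u_0-u^0_h}_{1,\Omega}$, which is $\eta_{\mathrm{init}}\norm{e}_{0,\Omega}$. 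For the reconstruction term, exact integration means that $u^j_h$ is the Galerkin projection of $R^j$ with respect to $a$. This orthogonality passes to the linear interpolant, so $a((\tilde R-\tilde u_h)(s),v_h)=0$ for all $v_h\in V^0_h$ and every $s$. We therefore subtract $P_hG(T-s)$, apply \autoref{lem:project} and the $H^2$ bound on $G$, take the maximum of $\eta_{S,2}(T-s)$ over $I_j$, which is $\mu_j$, and bound $\norm{(\tilde R-\tilde u_h)(s)}$ exactly as in part (iii) of the proof of \autoref{theo-h1-est}, using \eqref{eq-half-err-ell-rec} and \eqref{eq-delta-err-ell-rec}. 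This gives $\eta_{\mathrm{ell}}\norm{e}_{0,\Omega}$.

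\emph{Main obstacle.} The delicate step is this last group, the orthogonality-plus-$H^2$ argument. It needs orthogonality to hold pointwise in $s$ for the interpolant, which is why exact integration is assumed. It also needs $G(T-s)\in H^2(\Omega)$ with the uniform-in-$s$ bound \eqref{eq:G-2}, and this is where \autoref{ass-coeffs-l-m} enters. A further point is that the norm used in the elliptic estimators is $\norm{\cdot}_{a,\Omega}$, while \autoref{lem:project} is stated in $\norm{\cdot}_{1,\Omega}$. Converting between them requires a Cauchy--Schwarz step in the $a$-inner product, $\abs{a(w,v)}\le\norm{w}_{a,\Omega}\norm{v}_{a,\Omega}\le\sqrt{C_\mL}\,\norm{w}_{a,\Omega}\norm{v}_{1,\Omega}$, so the constant in front of $\eta_{\mathrm{ell}}$ may need to be adjusted accordingly.
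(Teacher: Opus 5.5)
Your proposal is correct and follows the paper's proof essentially step by step. You use the same four-term split of \eqref{eq-res-l2} and the $H^1$ bound \eqref{eq:G-1} for the data and $\psi$-terms, with integration by parts against $(s-t_j)(s-t_{j-1})/2$ producing $\chi_j$; for the initial and reconstruction terms you use Galerkin orthogonality together with Lemma~\ref{lem:project} and \eqref{eq:G-2}, exactly as the paper does. Your use of $\iLMa$ rather than the paper's $\iLM$ for $\pt_s G(T-s)$ is correct, and so is your caveat about the norm mismatch in the elliptic estimators: Assumption~\ref{ass-ell-est} is stated in $\norm{\cdot}_{1,\Omega}$, while \eqref{eq-erro-ell-rec} uses $\norm{\cdot}_{a,\Omega}$. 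Both are accurate refinements of points the paper glosses over.
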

\begin{proof}
  The proof is very similar to that of \autoref{theo-h1-est}, with
  some details differing.
  First an application of the triangle inequality to~\eqref{eq-res-l2} gives
  \begin{align}\label{eq-res-l2-abs}
      \norm{\left(u-\tilde{u}_h\right)(T)}^2_{0,\Omega} 
        & \le \abs{a\scal{G(t)}{u_0 - u^0_h}}
            + \abs{\int_0^T \dual{\bigl(F - \tilde{F}\bigr)(s)}{G(T-s)}  \D s} \\ 
          \notag
        & \qquad + \abs{\int_0^T a\scal{\bigl(\tilde{R}-\tilde{u}_h\bigr)(s)}{G(T-s)} \D s}
            + \abs{\int_0^T a\scal{\bigl( \tilde{\psi}_h + \pt_s \tilde{u}_h \bigr)(s)}{G(T-s)} \D s}.
  \end{align}
  The terms on the right-hand side need to be bouded separately.
		
  \noindent\emph{(i)} \
  Let \mbox{$P_h$} be the projection operator from \autoref{lem:project}.
  Then, by \eqref{eq-det-u0h-l2}
  \begin{gather*}
    \abs{a\scal{G(T)}{u_0 - u^0_h}}
      = \abs{a\scal{G(T)-P_hG(T)}{u_0 - u^0_h}}
      \le C_\mL \norm{G(T)-P_hG(T)}_{1,\Omega} \norm{u_0 -u^0_h}_{1,\Omega} 
  \end{gather*}
  From \autoref{lem:project}, we infer that
  \begin{gather}\label{eq-h1-err-int-green}
    \norm{G(T)-I_hG(T)}_{1,\Omega} \leq C_I h_{\mathrm{max}} \abs{G(T)}_{2,\Omega}.
  \end{gather}
  The semi norm of $G(T)$ can be bounded using~\eqref{eq:G-2}.
  We get
  \begin{gather*}
    \abs{a\scal{G(T)}{u_0 - u^0_h}}
      \le C_\mL C_I \Opnorm{\iL}_{0,2} h_{\max} \eta_{S,2}(T)
          \norm{\left(u-\tilde{u}_h\right)(T)}_{0,\Omega} \norm{u_0 - u^0_h}_{1,\Omega}.
  \end{gather*}

  \noindent\emph{(ii)} \
  Clearly,
  \begin{gather*}
    \abs{\int_{0}^{T} \dual{\bigl(F - \tilde{F}\bigr)(s)}{G(T-s)} \D s}
      \le \sum_{j=1}^{M} \int_{I_j} \norm{\bigl(F - \tilde{F}\bigr)(s)}_{-1,\Omega}
                         \norm{G(T-s)}_{1,\Omega} \D s.
  \end{gather*}
  Application of eq.~\eqref{eq:G-1} from \autoref{lem:green:estimates} gives
  \begin{gather*}
    \abs{\int_{0}^{T} \dual{\bigl(F - \tilde{F}\bigr)(s)}{G(T-s)} \D s}
      \le \frac{\norm{\left(u-u_h\right)(T)}_{0,\Omega}}{\alpha}
              \sum_{j=1}^{M} \sigma_j \int_{I_j} \norm{\bigl(F-\tilde{F}\bigr)(s)}_{-1,\Omega}.
  \end{gather*}

  \noindent\emph{(iii)} \
  We proceed similarly to \emph{(i)}:
  \begin{align*}
    & \abs{a\scal{\bigl(\tilde{R}-\tilde{u}_h\bigr)(s)}{G(T-s)}}
      = \abs{a\scal{\bigl(\tilde{R}-\tilde{u}_h\bigr)(s)}
                   {\bigl(G-P_hG\bigr)(T-s)}} \\
    & \qquad
      \le C_\mL \norm{\bigl(\tilde{R} - \tilde{u}_h\bigr)(s)}_{1,\Omega}
                \norm{\bigl(G-P_hG\bigr)(T-s)}_{1,\Omega}
      \le C_\mL \norm{\bigl(\tilde{R} - \tilde{u}_h\bigr)(s)}_{1,\Omega}
                C_I h_{\max} \abs{G-P_hG}_{2,\Omega} \\
    & \qquad
      \le C^* h_{\mathrm{max}} \mu_j
              \norm{\bigl(\tilde{R} - \tilde{u}_h\bigr)(s)}_{1,\Omega}
              \norm{\left(u-u_h\right)(T)}_{0,\Omega},
  \end{align*} 
  by~\eqref{eq:G-2}.
  Utilizing \eqref{eq-erro-ell-rec} and \eqref{eq-half-err-ell-rec}  
  we get
  \begin{align*}
    \abs{\int_{0}^{T} a\scal{\bigl(\tilde{R}-\tilde{u}_h\bigr)(s)}{G(T-s)} \D s}
      \le \norm{\left(u -u_h\right)(T)}_{0,\Omega} C h_{\max}
          \sum_{j=1}^{M} \mu_j \left(\tau_j \eta^j_{\mathrm{ell}, 1/2}
                 + \frac{\tau_j^2}{4} \eta^j_{\mathrm{ell}, \delta_t}\right)
  \end{align*}

  \noindent\emph{(iv)} \
  We mimic the procedure in \emph{(iv)} in the proof of \autoref{theo-h1-est}.
  \begin{align*}
     & \int_{I_j} a\scal{\bigl(\tilde{\psi}_h + \pt_s \tilde{u}_h \bigr)(s)}
                        {G(T-s)} \IntD s \\
     & \qquad
       = \int_{I_j} a\scal{\frac{\psi^j_h+\psi^{j-1}_h}{2} + \delta_t u^j_h}
                          {G(T-s)} \D s
           + \int_{I_j} a\scal{\bigl(s-t_{j-1/2}\bigr)\delta_t\psi^j_h}
                              {G(T-s)} \D s \\
     & \qquad
       = \int_{I_j} a\scal{\frac{\psi^j_h+\psi^{j-1}_h}{2} + \delta_t u^j_h}
                          {G(T-s)} \D s
           + \int_{I_j} a\scal{\frac{\bigl(t_j-s\bigr)\bigl(s-t_{j-1}\bigr)}{2}
              \delta_t\psi^j_h}{\iLM G(T-s)} \D s,
  \end{align*}
  where we have used integration by parts and~\eqref{S-exp}.
  Then using the boundedness of $a\scal\cdot\cdot$, \eqref{eq-bound-lm} and
  \autoref{lem-bound-c0}, we obtain
  \begin{align*}
     & \abs{\int_{I_j}
          a\scal{\bigl(\tilde{\psi}_h + \pt_s \tilde{u}_h \bigr)(s)}
                {G(T-s)} \IntD s} \\
     & \qquad
       \le \frac{C_\mL\sigma_j}{\alpha}
           \norm{\left(u-u_h\right)(T)}_{0,\Omega}
           \left(\tau_j \norm{\frac{\psi^j_h+\psi^{j-1}_h}{2}
                          + \delta_t u^j_h}_{1,\Omega}
                   + \min\left\{\frac{\tau_j^2}{4},
                                \frac{C_\mM}{\alpha} \frac{\tau_j^3}{12}
                         \right\}
                     \norm{\delta \psi^j_h}_{1,\Omega}
           \right).
  \end{align*}
  Summing this inequality for $j=1,\dots,M$, and then applying the result
  combined with \emph{(i)} -- \emph{(iv)} to \eqref{eq-res-l2-abs} and
  dividing by $\norm{\left(u-u_h\right)(T)}_{0,\Omega}$, we complete the proof. 
\end{proof}

\begin{remark}
  \emph{(i)} \
  Since \mbox{$H^2(\Omega)\hookrightarrow C(\bar{\Omega})$}
  for \mbox{$\Omega \subset \RR^n$}, \mbox{$n=1,2,3$}, a possible choice
  of $P_h$ is the \mbox{\textsc{Lagrange}} interpolation operator.
		
  \noindent\emph{(ii)} \
  We point out, that the constant $C_I$ is independent of $G$ and thus also
  of $u$.
  Discussions on the value of $C_I$ for \mbox{$\Omega \subset \RR$} can be
  found in \cite[§3.3]{schwab1998finite},
  while the case \mbox{$\Omega \subset \RR^2$} is studied, e.\,g.,
  in \cite[§9.2]{arendt2018partielle}.
\end{remark}

\section{A numerical Example}\label{sec-num-ex}
We consider the following pseudo-parabolic equation
\begin{align*}
-u_{xxt} + \left(5x+6\right)u_t - u_{xx} -\E^{-x}u
&= \E^{-4t} + \cos\left(\pi(x+t)^2\right), \quad \text{on } (-1,1) \times (0,1], \\
\intertext{subject to homogeneous Dirichlet boundary conditions and
       the initial condition}
u(x,0) &= \sin x\pi, \quad \text{for } x\in[-1,1].
\end{align*}
The assumptions of the analysis hold with
\begin{gather*}
  C_\mL = 11\, , \quad \alpha=1\, , \quad C_\mM = \E \quad \text{and} \quad \gamma = -\E.
\end{gather*}
The exact solution of this problem is unknown.
Instead, a reference solution is computed using a spectral \textsc{Galerkin}
method in space combined with the dG$(2)$ method in time,
which is of order~5.

The approximations \mbox{$u^2_h, \dots, u^M_h$} are computed using a conforming
$\mathbb{P}_k$-FEM in space and the BDF-2 method in time.
The first approximation, $u^1_h$, is computed using the backward \textsc{Euler}
method.
Uniform meshes with $M$ mesh intervals are used both in time and in space.
In order to balance spatial and temporal accuracy, we chose \mbox{$k=1$} when
studying the error estimator $\eta_{L^2}$, and \mbox{$k=2$} when
studying $\eta_{H^1}$.

For the elliptic estimator $\eta_{\mathrm{ell}}^j$ we take a somewhat simplified
approach.
We approximate $R^j$, the solution of~\eqref{eq-ell-def}, by $\hat{u}_h^j$ which is
computed using the $\mathbb{P}_{k+1}$-FEM, i.\,e., a method of higher order.
Then we estimate
\begin{gather*}
  \norm{R^j - u^j_h}_{1,\Omega} \approx
  \norm{\hat{u}^j_h - u^j_h}_{1,\Omega} \eqqcolon \eta_{\mathrm{ell}}^j.
\end{gather*}
It has to be noted, that this does in general not give an upper error bound,
however it give an estimate that is asymptotically exact as \mbox{$h_{\max}\to0$}.
Similarly, we choose \mbox{$\eta^j_{\mathrm{ell},1/2}$}
and \mbox{$\eta^j_{\mathrm{ell}, \delta_t}$}. 

The component $\eta_F$ of the error estimators has to be approximated too.
We use the approach outlined in \autoref{rem:eta-f}, by using \textsc{Simpson}'s
rule.
Also, the norms \mbox{$\norm{\cdot}_{0,\Omega}$} and \mbox{$\norm{\cdot}_{1,\Omega}$}
need to be approximated.
For this we employ a \textsc{Gauss-Lobatto} quadrature formula of order 5 on
each spatial interval.

In our experiments our focus is on the efficiency of the error estimators.
For fixed $M$, let $\mathrm{err}_M$ denote the error of our approximation at
final time $T$ and $\mathrm{est}_M$ the corresponding a posteriori error
bound -- either for the $H^1$ norm or form the $L^2$ norm.
Then the efficiency is defined as
\begin{gather*}
  \mathrm{eff}_M \coloneqq \frac{\mathrm{err}_M}{\mathrm{est}_M}\,.
\end{gather*}
Clearly, \mbox{$\mathrm{eff}_M\le 1$}, where values close to $1$ indicate
a high efficiency.
We will also monitor the experimental order of convergence as defined by
\begin{gather*}
  p_M \coloneqq \log_2 \frac{\mathrm{err}_{M/2}}{\mathrm{err}_M}.
\end{gather*}

\paragraph{\boldmath$H^1$-norm.}
\hyperref[tab-bdf2-fem-h1]{Tables \ref{tab-bdf2-fem-h1}} and
\ref{tab-bdf2-fem-com-h1} document the results of our experiments for
the $H^1$-norm estimator.
\autoref{tab-bdf2-fem-h1} contains the number of mesh intervals in time
and space,
the $H^1$-norm error at final time $T$,
the experimental order of convergence $p_M$,
the estimator and the reciprocal of the efficiency $1/\mathrm{eff}_M$.
\begin{table}[h!]
  \centerline{\begin{tabular}{|c|c|c|c|c|}\hline
      $M$ & \mbox{$\norm{\left(u-\tilde{u}_h\right)(T)}_{1,\Omega}$}
          & $p_M$ & $\eta_{H^1}$ & $1/\mathrm{eff}_M$ \\ 
      \hline
        64 & 8.769e-03 & 1.94 & 3.713e-02 & 4.23 \\
       128 & 2.239e-03 & 1.97 & 9.179e-03 & 4.10 \\
       256 & 5.660e-04 & 1.98 & 2.281e-03 & 4.03 \\
       512 & 1.423e-04 & 1.99 & 5.684e-04 & 4.00 \\
      1024 & 3.567e-05 & 2.00 & 1.419e-04 & 3.98 \\
      2048 & 8.923e-06 & 2.00 & 3.543e-05 & 3.97 \\
      4096 & 2.234e-06 & 2.00 & 8.857e-06 & 3.97 \\
      8192 & 5.489e-07 & 2.02 & 2.792e-06 & 5.09 \\\hline
  \end{tabular}}
  \caption{Test results for the $H^1$-norm error estimator}
  \label{tab-bdf2-fem-h1}
\end{table}
We observe convergence of order $2$ -- as expected of the BDF-2 method -- and
a strong correlation of the actual errors and the computed error estimators.
The errors are overestimated by a factor of about $4$.
For our finest discretisation, i.\,e., for \mbox{$M=8192$} we notice a slight
deterioration. This can be attributed to rounding errors from working with
machine numbers.
\autoref{tab-bdf2-fem-com-h1} displays the various components of the error
estimator. 
We observe that the dominant components are $\eta_F$ and $\eta_{\mathrm{ell}}$.

\begin{table}[h!]
  \centerline{\begin{tabular}{|c|c|c|c|c|c|}\hline
      $M$ & $\eta_{\mathrm{init}}$ & $\eta_F$ & $\eta_{\mathrm{ell}}$
          & $\eta_\Psi$ & $\eta_{\delta \psi}$ \\ 
			\hline
	64 & 1.632e-05 & 2.384e-02 & 1.070e-02 & 1.804e-03 & 7.654e-04 \\
       128 & 4.076e-06 & 5.846e-03 & 2.703e-03 & 4.418e-04 & 1.842e-04 \\
       256 & 1.019e-06 & 1.447e-03 & 6.785e-04 & 1.093e-04 & 4.519e-05 \\
       512 & 2.546e-07 & 3.598e-04 & 1.699e-04 & 2.718e-05 & 1.119e-05 \\
      1024 & 6.366e-08 & 8.972e-05 & 4.252e-05 & 6.778e-06 & 2.784e-06 \\
      2048 & 1.592e-08 & 2.240e-05 & 1.063e-05 & 1.692e-06 & 6.943e-07 \\ 
      4096 & 3.979e-09 & 5.597e-06 & 2.660e-06 & 4.228e-07 & 1.734e-07 \\
      8192 & 9.947e-10 & 1.399e-06 & 1.243e-06 & 1.057e-07 & 4.332e-08 \\\hline
  \end{tabular}}
  \caption{Components of the estimator $\eta_{H^1}$}
  \label{tab-bdf2-fem-com-h1}
\end{table}

\paragraph{\boldmath$L^2$-norm.}
We now turn our attention to the $L^2$-norm estimator $\eta_{L^2}$.
This estimator involves a few more constants.
First, \autoref{lem:project} holds true with
\begin{gather*}
  C_I = \frac{3}{2} \sqrt{\frac{1}{2}}\,, \quad
  \text{cf. \cite[Section 3.3]{schwab1998finite}.}
\end{gather*}
The derivation of $\eta_{L^2}$ relies on \autoref{lem:green:estimates}.
It states the existence of constants $M_2$ and $\omega_2$, however these are
not easily determined, as is $\Opnorm{\iL}_{0,2}$.
For demonstration purposes we assume they all have value~$1$.
This solely affects the overestimation of the errors.

\begin{table}[h!]
  \centerline{\begin{tabular}{|c|c|c|c|c|}\hline
    $M$ & \mbox{$\norm{\left(u-\tilde{u}_h\right)(T)}_{0,\Omega}$}
        & $p_M$ & $\eta_{L^2}$ & $1/\mathrm{eff_M}$ \\\hline
     64 & 4.310e-03 & 1.94 & 5.627e-01 & 130.55 \\ 
     128 & 1.102e-03 & 1.97 & 1.401e-01 & 127.16 \\ 
     256 & 2.787e-04 & 1.98 & 3.496e-02 & 125.45 \\ 
     512 & 7.008e-05 & 1.99 & 8.731e-03 & 124.59 \\ 
    1024 & 1.757e-05 & 2.00 & 2.182e-03 & 124.15 \\ 
    2048 & 4.396e-06 & 2.00 & 5.452e-04 & 124.01 \\ 
    4096 & 1.104e-06 & 1.99 & 1.363e-04 & 123.47 \\ 
    8192 & 2.393e-07 & 2.21 & 3.366e-05 & 140.65 \\\hline
  \end{tabular}}
  \caption{Test results for the $L^2$-norm}
  \label{tab-bdf2-fem-l2}
\end{table}

\autoref{tab-bdf2-fem-l2} displays the results of our test computation.
As expected the methods converges with order $2$.
Again we witness a strong correlation of the errors and the a posteriori
error bounds.
The precise values of $\mathrm{eff}_M$ are of course of rather speculative
nature, because we have fixed some unknown constants to be~$1$.


\begin{thebibliography}{99}
  \bibitem{pazy1983semigroups}
    A.~Pazy.
    \newblock {\em {Semigroups of linear operators and applications to partial
    differential equations} }.
    \newblock{Springer-Verlag, New York}, 1983.		
		
  \bibitem{tanabe1979evolution}
    H.~Tanabe.
    \newblock {\em {Equations of evolution}}.
    \newblock {Pitman (Advanced Publishing Program), Boston, Mass.-London}, 1979.
		
  \bibitem{engel2000semigroups}
    K.-J.~Engel~and~R.~Nagel. 
    \newblock {\em {One-parameter semigroups for linear evolution equations}}.
    \newblock {Springer-Verlag, New York}, 2000.
		
  \bibitem{nochetto2003reconstrution}
    Ch.~Makridakis~and~R.~H.~Nochetto.
    \newblock{\em {Elliptic reconstruction and a posteriori error estimates for parabolic problems}}.
    \newblock{SIAM J. Numer. Anal., \textbf{41}}, pp. 1585--1594, 2003.
		
  \bibitem{diestel1977measures}
    J.~Diestel~and~J.~J.~Uhl,~Jr.
    \newblock{\em{Vector measures}}.
    \newblock{American Mathematical Society, Providence, RI}, 1977.
		
  \bibitem{ting1969sobolev}
    T.~W.~Ting.
    \newblock{\em{Parabolic and pseudo-parabolic partial differential equations}}.
    \newblock{J. Math. Soc. Japan, \textbf{21}}, pp. 440--453, 1969.
		
  \bibitem{ting1970sobolev}
    R.~E.~Showalter~and~T.~W.~Ting.
    \newblock{\em{Pseudoparabolic partial differential equations}}.
    \newblock{SIAM J. Math. Anal., \textbf{1}}, pp. 1--26, 1970.
		
  \bibitem{ewing1975sobolev}
    R.~E.~Ewing.
    \newblock{\em{Numerical solution of {S}obolev partial differential
    equations}}.
    \newblock{SIAM J. Numer. Anal., \textbf{12}}, pp. 345--363, 1975.
		
  \bibitem{ewing1978sobolev}
    R.~E.~Ewing.
    \newblock{\em{Time-stepping {G}alerkin methods for nonlinear {S}obolev
    partial differential equations}}.
    \newblock{SIAM J. Numer. Anal., \textbf{15}}, pp. 1125--1150, 1978.
    
  \bibitem{ford1974sobolev}
    W.~H.~Ford~and~T.~W.~Ting.
    \newblock{\em{Uniform error estimates for difference approximations to
    nonlinear pseudo-parabolic partial differential equations}}.
    \newblock{SIAM J. Numer. Anal., \textbf{11}}, pp. 155-169, 1974.
		
  \bibitem{ford1976sobolev}
    W.~H.~Ford.
    \newblock{\em{Galerkin approximations to non-linear pseudo-parabolic partial
    differential equations}}.
    \newblock{Aequationes Math., \textbf{14}}, pp. 271--291, 1976.
		
  \bibitem{thomee1981sobolev}
    D.~N.~Arnold~and~J.~Douglas,~Jr.,~and~V.~Thom\'ee.
    \newblock{\em{Superconvergence of a finite element approximation to the
    solution of a {S}obolev equation in a single space variable}}.
    \newblock{Math. Comp., \textbf{36}}, pp. 53--63, 1981.
		
  \bibitem{tran2005estimation}
    T.~Tran~and~T.-B.~Duong.
    \newblock{\em{A posteriori error estimates with the finite element method of
    lines for a {S}obolev equation}}.
    \newblock Numer. Meth. Part. Diff. Eq., \textbf{21}, pp. 521--535, 2005.
		
  \bibitem{demlow2016error}
    A.~Demlow~and~N.~Kopteva.
    \newblock{\em{Maximum-norm a posteriori error estimates for singularly
    perturbed elliptic reaction-diffusion problems}}.
    \newblock{Numer. Math., \textbf{133}}, pp. 707--742, 2016.
		
  \bibitem{demlow2010error}
    A.~Demlow~and~Ch.~Makridakis.
    \newblock{\em{Sharply local pointwise a posteriori error estimates for
    parabolic problems}}.
    \newblock{Math. Comp., \textbf{79}}, pp. 1233-1262, 2010.
			
  \bibitem{linss2013estimation}
    N.~Kopteva~and~T.~Linss.
    \newblock{\em{Maximum norm a posteriori error estimation for parabolic
    problems using elliptic reconstructions}}.
    \newblock{SIAM J. Numer. Anal., \textbf{51}}, pp. 1494--1524, 2013.
    
  \bibitem{ossadnik2024unified}
    T.~Lin\ss~and~M.~Ossadnik~and~G.Radojev.
    \newblock{\em{A unified approach to maximum-norm {\it a posteriori} error
    estimation for second-order time discretizations of parabolic
    equations}}.
    \newblock{IMA J. Numer. Anal., \textbf{44}}, pp. 1644--1659, 2024.	
		
  \bibitem{grossmann2007numerical}
    Ch.~Großmann~and~~H.-G.~Roos~and~M.~Stynes
    \newblock{\em{Numerical treatment of partial differential equations}}.
    \newblock{Springer Berlin Heidelberg New York}, 2007
		
  \bibitem{arendt2018partielle}
    W.~Arendt~and~K.~Urban
    \newblock{\em{Partielle Differenzialgleichungen: Eine Einführung in analytische und numerische Methoden}},
    \newblock{Springer Berlin Heidelberg}, 2018
	 	
  \bibitem{grisvard1985elliptic}
    P.~Grisvard
    \newblock{\em{Elliptic problems in nonsmooth domains}},
    \newblock{SIAM, Philadelphia, PA}, 2011
		
  \bibitem{schwab1998finite}
    Ch.~Schwab
    \newblock{\em{$p$- and $hp$-finite element methods}},
    \newblock{The Clarendon Press, Oxford University Press, New York}, 1998
\end{thebibliography}
\end{document}

\typeout{get arXiv to do 4 passes: Label(s) may have changed. Rerun}